\theoremstyle{plain}
\newtheorem{theorem}{Theorem}[section]
\newtheorem{lemma}[theorem]{Lemma}
\newtheorem{proposition}[theorem]{Proposition}
\newtheorem{corollary}[theorem]{Corollary}
\theoremstyle{definition}
\newtheorem{question}[theorem]{Question}
\newtheorem{conjecture}[theorem]{Conjecture}
\newcommand{\re}{\upharpoonright}
\newcommand{\coord}{\mathsf{coord}}
\newcommand{\length}{\mathsf{length}}
\newcommand{\cl}{\mathsf{cl}}
\newcommand{\add}{\mathsf{add}}
\newcommand{\CDH}{\mathsf{CDH}}
\newcommand{\MP}{\mathsf{MP}}
\newcommand{\ZFC}{\mathsf{ZFC}}
\newcommand{\Cof}{\mathsf{Cof}}
\newcommand{\Fin}{\mathsf{Fin}}
\newcommand{\Ss}{\mathcal{S}}
\newcommand{\DD}{\mathcal{D}}
\newcommand{\FF}{\mathcal{F}}
\newcommand{\PP}{\mathcal{P}}
\newcommand{\XX}{\mathcal{X}}
\newcommand{\PPP}{\mathbb{P}}
\newcommand{\RRR}{\mathbb{R}}
\newcommand{\cccc}{\mathfrak{c}}
\newcommand{\pppp}{\mathfrak{p}}
\newcommand{\tttt}{\mathfrak{t}}
\begin{document}

\title[Non-meager $\mathsf{P}$-filters]{Non-meager $\mathsf{P}$-filters, Miller-measurability, and a question of Hru\v{s}\'{a}k}

\author{Andrea Medini}
\address{Institut f\"{u}r Diskrete Mathematik und Geometrie
\newline\indent Technische Universit\"{a}t Wien
\newline\indent Wiedner Hauptstra\ss e 8-10/104
\newline\indent 1040 Vienna, Austria}
\email{andrea.medini@tuwien.ac.at}
\urladdr{https://www.dmg.tuwien.ac.at/medini/}

\subjclass[2020]{Primary 03E05, 03E17, 54B10.}

\keywords{Filter, non-meager, $\mathsf{P}$-filter, countable dense homogeneous, pseudointersection number, Miller property, Miller-measurability, intersection, product.}

\thanks{The author is grateful to Yurii Khomskii for directing him to the article \cite{goldstern_repicky_shelah_spinas}. This research was funded in whole by the Austrian Science Fund (FWF) DOI 10.55776/P35588. For open access purposes, the author has applied a CC BY public copyright license to any author-accepted manuscript version arising from this submission.}

\date{January 25, 2026}

\begin{abstract}
Given a cardinal $\kappa$ and filters $\FF_\alpha$ on $\omega$ for $\alpha\in\kappa$, we will show that if $\prod_{\alpha\in\kappa}\FF_\alpha$ is countable dense homogeneous then $\kappa<\pppp$ and each $\FF_\alpha$ is a non-meager $\mathsf{P}$-filter. This partially answers a question of Michael Hru\v{s}\'{a}k. Along the way, we will show that the product of fewer than $\pppp$ non-meager $\mathsf{P}$-filters has the Miller property. We will also describe explicitly the connection between Miller-measurability and the Miller property. As a corollary, we will see that the intersection of fewer than $\add(m^0)$ non-meager $\mathsf{P}$-filters is a non-meager $\mathsf{P}$-filter, where $m^0$ denotes the ideal of Miller-null sets. We will conclude by investigating the preservation of the Miller property under intersections and products.
\end{abstract}

\maketitle

\tableofcontents

\section{Introduction}\label{section_introduction}

By \emph{space} we will mean Tychonoff topological space. By \emph{countable} we will mean at most countable. Recall that a space $X$ is \emph{countable dense homogeneous} ($\CDH$) if $X$ is separable and for every pair $(D,E)$ of countable dense subsets of $X$ there exists a homemorphism $h:X\longrightarrow X$ such that $h[D]=E$. We refer to \cite[Sections 14-16]{arhangelskii_van_mill} for an introduction to countable dense homogeneity, and to \cite{hrusak_van_mill} for a collection of open problems on this topic. It is a fundamental result that every ``sufficiently homogeneous'' Polish space (such as $2^\omega$, $\omega^\omega$, or any separable metrizable manifold) is $\CDH$ (see \cite[Theorem 5.2]{anderson_curtis_van_mill} or \cite[Theorem 14.1]{arhangelskii_van_mill} for a precise statement).

Motivated by a question\footnote{\,We are referring to \cite[Question 3.2]{hrusak_zamora_aviles}, which asks for a non-Polish zero-dimensional separable metrizable space $X$ such that $X^\omega$ is $\CDH$; a $\ZFC$ example of such a space was eventually given in \cite[Theorem 8.4]{medini}.} from \cite{hrusak_zamora_aviles}, the article \cite{medini_milovich} began the study of the countable dense homogeneity of the (ultra)filters on $\omega$, viewed as subspaces of $2^\omega$. Further results on this topic were obtained in \cite{hernandez_gutierrez_hrusak} and \cite{repovs_zdomskyy_zhang}. The following result (which is a fragment of \cite[Theorem 10]{kunen_medini_zdomskyy}) can be viewed as the culmination of this line of research. According to \cite[Definition 5]{kunen_medini_zdomskyy}, a space $X$ has the \emph{Miller property} ($\MP$) if for every countable crowded $Q\subseteq X$ there exists a crowded $Q'\subseteq Q$ such that the closure of $Q'$ in $X$ is compact. This property was introduced because it is very convenient when countable dense homogeneity and filters are involved, and this fact will reassert itself in the present context.

\begin{theorem}[Kunen, Medini, Zdomskyy]\label{theorem_kunen_medini_zdomskyy}
Let $\FF$ be a filter. Then the following conditions are equivalent:
\begin{itemize}
\item $\FF$ is a non-meager $\mathsf{P}$-filter,
\item $\FF$ is $\CDH$,
\item $\FF$ has the $\MP$.
\end{itemize}
\end{theorem}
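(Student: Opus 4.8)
The plan is to run the cycle
\[
\text{non-meager }\mathsf{P}\text{-filter}\ \Longrightarrow\ \MP\ \Longrightarrow\ \CDH\ \Longrightarrow\ \text{non-meager }\mathsf{P}\text{-filter}.
\]
Throughout I identify $\FF$ with the set of characteristic functions of its elements, a dense crowded subset of $2^\omega$ (density and crowdedness being where we use that $\FF$ is a free filter). The basic observation, used repeatedly, is that since $2^\omega$ is compact, for a crowded $Q'\subseteq\FF$ one has $\cl_\FF(Q')=\cl_{2^\omega}(Q')\cap\FF$, and this is compact if and only if $\cl_{2^\omega}(Q')\subseteq\FF$; in that case $\cl_{2^\omega}(Q')$ is automatically a Cantor set. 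So $\MP$ for $\FF$ reads: every countable crowded $Q\subseteq\FF$ contains a crowded $Q'$ whose closure in $2^\omega$ is a subset of $\FF$.

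For $\MP\Rightarrow\CDH$, I would build, given countable dense $D,E\subseteq\FF$, a homeomorphism $h\colon\FF\to\FF$ with $h[D]=E$ by a back-and-forth construction along a tree of clopen subsets of $2^\omega$: at each stage one holds two finite clopen partitions $\{V_i\}$ and $\{W_i\}$ of $2^\omega$ matched so that $h$ already carries each $V_i\cap\FF$ onto $W_i\cap\FF$, and to absorb the next point of $D$ (or of $E$, alternating) one refines the relevant cell. The role of $\MP$ is that inside any nonempty relatively open subset of $\FF$ it produces a crowded set whose $2^\omega$-closure is a Cantor set sitting inside $\FF$; placing such Cantor ``bricks'' at the leaves of the tree, one completes the partial maps using the fact that any two Cantor sets are homeomorphic by a homeomorphism matching any two prescribed countable dense subsets (strong homogeneity of $2^\omega$). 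The delicate point here is bookkeeping: arranging that the clopen boxes shrink and that the bricks are placed densely enough that $h$ is defined on all of $\FF$, is a bijection, and exhausts $D$ and $E$.

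The implications into ``non-meager $\mathsf{P}$-filter'' I would treat together with the analogous $\MP\Rightarrow$ non-meager and $\MP\Rightarrow\mathsf{P}$-filter. If $\FF$ fails to be a $\mathsf{P}$-filter, fix a decreasing sequence $(A_n)$ in $\FF$ with no pseudointersection in $\FF$; if $\FF$ is meager, then it is feeble, i.e.\ there is an interval partition $\omega=\bigsqcup_nI_n$ such that every member of $\FF$ meets all but finitely many $I_n$. In either case I would use the pathology to build a bad test object: for $\MP$, a countable crowded $Q\subseteq\FF$ no crowded subset of which has compact closure in $\FF$; for $\CDH$, a pair $(D,E)$ of countable dense subsets of $\FF$ that cannot be matched by any autohomeomorphism of $\FF$, the obstruction being a coarse invariant of countable dense sets --- definable from $(I_n)$, resp.\ from $(A_n)$ --- that is preserved by homeomorphisms of $\FF$ but distinguishes $D$ from $E$.

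The technical heart is non-meager $\mathsf{P}$-filter $\Rightarrow\MP$. Given countable crowded $Q\subseteq\FF$, I would recursively build a Cantor-type scheme $(t_s)_{s\in 2^{<\omega}}$ with $t_s\in Q$, together with splitting levels $k_0<k_1<\cdots$, so that $Q'=\{t_s:s\in 2^{<\omega}\}$ is crowded while every branch limit $b_x=\lim_n t_{x\re n}$ lies in $\FF$. The content is that $b_x$ equals the $\liminf$ of the sets approximating it, so to keep $b_x\in\FF$ it suffices to arrange that along every branch one eventually contains a fixed $R\in\FF$: the $\mathsf{P}$-filter property supplies such an $R$ (a pseudointersection of the relevant countably many members of $\FF$), and non-meagerness --- via the Talagrand characterization applied to the interval partition generated by the $k_n$ --- is exactly what allows the scheme to be refined so that, along each branch, the still-missing portion of $R$ is swallowed at the next level while $Q'$ stays inside $Q$ and remains crowded. (Reorganized, this is the known combinatorial characterization of non-meager $\mathsf{P}$-filters: for every sequence in $\FF$ there is a member of $\FF$ together with an interval partition suitably matching them.) I expect the main obstacle to be precisely this synchronization, of keeping the scheme inside the prescribed $Q$, keeping it crowded, and forcing all $2^{\aleph_0}$ branch limits into $\FF$ simultaneously; that non-meagerness is genuinely needed here (and not just the $\mathsf{P}$-filter property) is shown by the Fréchet filter, which is a $\mathsf{P}$-filter without $\MP$.
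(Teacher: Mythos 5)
The paper does not actually prove this statement: it is imported verbatim as a fragment of \cite[Theorem 10]{kunen_medini_zdomskyy}, so there is no in-paper argument to measure yours against. The only overlap is Lemma \ref{lemma_generalized_miller} with $\kappa=1$, whose fusion construction (splitting levels $k_n$, a bookkeeping function, Talagrand's characterization applied to the interval partition generated by the $k_n$, and a fixed $z\in\FF$ forced below every branch limit) is essentially the argument you sketch for ``non-meager $\mathsf{P}$-filter $\Rightarrow\MP$''. That sketch, your reformulation of the $\MP$ for filters (a crowded $Q'\subseteq\FF$ has compact closure in $\FF$ iff its closure in $2^\omega$ is contained in $\FF$), and your remark that $\Cof$ is a meager $\mathsf{P}$-filter without the $\MP$ are all correct.

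The genuine gap is the arrow back into ``non-meager $\mathsf{P}$-filter''. There you promise ``a bad test object'' and ``a coarse invariant \ldots preserved by homeomorphisms of $\FF$ but distinguishing $D$ from $E$'' without defining either; this is not bookkeeping, it is the entire content of that direction. For the $\MP$ the constructions are concrete and should be written out: if $\FF$ is meager, hence feeble via an interval partition $(I_n)$, take $Q=\{\omega\setminus\bigcup_{n\in F}I_n:F\in\Fin\}\subseteq\Cof\subseteq\FF$; the closure of any crowded $Q'\subseteq Q$ is uncountable and every point of it outside $Q$ misses infinitely many $I_n$, hence lies outside $\FF$, so the $\MP$ fails. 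If $(A_n)$ is decreasing in $\FF$ with no pseudointersection in $\FF$, a similar countable crowded set of controlled finite modifications of the $A_n$ works, but one must bound the finite perturbations so that every point of the closure of a crowded subset is almost contained in each $A_n$. For $\CDH\Rightarrow$ non-meager $\mathsf{P}$-filter you would in addition need an actual homeomorphism invariant of countable dense subsets of $\FF$, and none is proposed. A secondary gap sits in $\MP\Rightarrow\CDH$: you match cells $V_i\cap\FF\to W_i\cap\FF$ and glue Cantor bricks, but never address why the limit map is defined on all of $\FF$ and carries $\FF$ onto $\FF$ --- on the part of $\FF$ lying in no brick the map is a limit of cell-matchings, and nothing in your setup keeps such limits inside $\FF$. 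The known proofs handle this by taking the cell-matchings to be finite translations $x\mapsto x\bigtriangleup a$ (which preserve $\FF$ because filters are closed under $=^\ast$) together with a Knaster--Reichbach-type control of the residual set; some version of this mechanism is indispensable for the back-and-forth to close.
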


Given that the product of at most $\cccc$ separable spaces is separable by the classical Hewitt-Marczewski-Pondiczery theorem (see \cite[Corollary 2.3.16]{engelking} or \cite[Exercise III.2.13]{kunen}), one might wonder whether the $\kappa$-th power of a sufficiently nice separable space could be $\CDH$ for uncountable values of $\kappa$. This was consistently achieved by Stepr\={a}ns and Zhou \cite[Theorem 4]{steprans_zhou}, who showed that $2^\kappa$, $\RRR^\kappa$ and $[0,1]^\kappa$ are $\CDH$ for every infinite $\kappa<\pppp$, where $\pppp$ denotes the \emph{pseudointersection number} (see \cite[Definition III.1.21]{kunen}). In fact, in the recent article \cite{medini_steprans}, we showed that much more general results hold (not just for powers, but for suitable products as well). Furthermore, the cardinal $\pppp$ is sharp, because by \cite[Theorem 8.1]{medini_steprans} no product of at least $\pppp$ non-trivial separable metrizable spaces is $\CDH$.\footnote{\,This result generalizes \cite[Theorem 3.3]{hrusak_zamora_aviles}, and its proof uses essentially the same argument.}

However, the following question remains open (see \cite[Question 7.1]{hernandez_gutierrez}, where it is credited to Michael Hru\v{s}\'{a}k).

\begin{question}[Hru\v{s}\'{a}k]\label{question_hrusak}
For which cardinals $\kappa$ and filters $\FF$ is $\FF^\kappa$ countable dense homogeneous?
\end{question}

Given the history that we discussed above, the following seems natural.

\begin{conjecture}\label{conjecture_complete_answer}
Let $\kappa$ be a cardinal and let $\FF_\alpha$ be filters for $\alpha\in\kappa$. Then the following conditions are equivalent:
\begin{itemize}
\item $\prod_{\alpha\in\kappa}\FF_\alpha$ is $\CDH$,
\item $\kappa <\pppp$ and each $\FF_\alpha$ is a non-meager $\mathsf{P}$-filter.
\end{itemize}
\end{conjecture}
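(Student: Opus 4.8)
The plan is to prove the two implications of the equivalence separately. I expect the implication ``$\prod_{\alpha\in\kappa}\FF_\alpha$ is $\CDH$ $\Rightarrow$ $\kappa<\pppp$ and each $\FF_\alpha$ is a non-meager $\mathsf{P}$-filter'' to be provable in full, while the converse is the real obstacle; I would approach the converse by lifting the machinery of Stepr\={a}ns--Zhou, together with the ``$\MP\Rightarrow\CDH$'' half of Theorem \ref{theorem_kunen_medini_zdomskyy}, from a single filter to products of fewer than $\pppp$ filters.

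For the forward direction, write $\FF=\prod_{\alpha\in\kappa}\FF_\alpha$ and suppose $\FF$ is $\CDH$. The bound $\kappa<\pppp$ is immediate: each filter is, as a subspace of $2^\omega$, a non-trivial (indeed crowded) separable metrizable space, so by \cite[Theorem 8.1]{medini_steprans} no product of $\pppp$ or more of them is $\CDH$. For the structural conclusion I would argue by contradiction, using that $\FF$ retracts onto a closed copy of each factor via the embedding that sends $A\in\FF_\alpha$ to the tuple whose $\alpha$-th coordinate is $A$ and all of whose other coordinates equal $\omega$ (this tuple lies in $\FF$ because $\omega$ belongs to every filter). If some $\FF_{\alpha_0}$ were not a non-meager $\mathsf{P}$-filter, then by Theorem \ref{theorem_kunen_medini_zdomskyy} it would carry a defect of the kind produced in \cite{medini_milovich} and \cite{kunen_medini_zdomskyy} --- a countable crowded subset no crowded piece of which has compact closure, or a countable subfamily with no pseudointersection in $\FF_{\alpha_0}$. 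Pushing this defect forward through the closed embedding (and filling the remaining coordinates with a fixed countable dense subset of $\prod_{\alpha\neq\alpha_0}\FF_\alpha$) yields a defect of $\FF$ of the same combinatorial type; and since the constructions of \cite{medini_milovich} and \cite{kunen_medini_zdomskyy} that turn such a defect into a pair of countable dense sets with no self-homeomorphism are essentially combinatorial in the ambient Cantor space, they should transfer to $\FF\subseteq(2^\omega)^\kappa$ and contradict $\CDH$. (Alternatively one first shows $\FF$ has the $\MP$ and then uses that the $\MP$ descends to closed subspaces --- closures in a closed subspace are intersections with it of ambient closures, and closed subsets of compacta are compact --- so that each $\FF_\alpha$ has the $\MP$, and Theorem \ref{theorem_kunen_medini_zdomskyy} applies.)

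For the converse, assume $\kappa<\pppp$ and that each $\FF_\alpha$ is a non-meager $\mathsf{P}$-filter, so that by Theorem \ref{theorem_kunen_medini_zdomskyy} each $\FF_\alpha$ is $\CDH$ and has the $\MP$. Given countable dense $D,E\subseteq\FF$, I would construct a homeomorphism matching them as the limit of an $\omega$-step back-and-forth of finite approximations, each acting as a product of coordinatewise self-maps on finitely many coordinates and as the identity elsewhere; at stage $n$ one absorbs the $n$-th points of $D$ and $E$ using controlled, relative forms of the $\CDH$ of the relevant $\FF_\alpha$'s (realizing a prescribed finite partial matching by a self-homeomorphism fixing prescribed coordinates), for which one will likely also need self-similarity properties of non-meager $\mathsf{P}$-filters such as $h$-homogeneity. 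As in \cite{steprans_zhou} and \cite{medini_steprans}, the hypothesis $\kappa<\pppp$ is what lets the coordinate-permuting part of the construction be carried out --- via a pseudointersection of an appropriate family of fewer than $\pppp$ subsets of $\kappa$ read off from $D$ and $E$ --- so that the finite approximations converge to an honest self-homeomorphism of $\FF$.

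The main obstacle is exactly this last step. The available $\CDH$-of-products theorems of \cite{steprans_zhou} and \cite{medini_steprans} are engineered for Polish --- or at least Borel --- factors and lean on completeness in the back-and-forth, whereas a non-meager $\mathsf{P}$-filter is typically non-Borel; this is what leaves the conjecture open and is why the present paper establishes only that the product has the $\MP$. I would therefore aim at the clean statement ``a product of fewer than $\pppp$ filters with the $\MP$ is $\CDH$'', which, combined with ``such products have the $\MP$'', closes the circle; a more modest preliminary goal is to pin down the exact strengthening of the $\MP$ that the construction requires --- for instance an $\MP$ equipped with a uniform choice of the witnessing crowded piece --- show that a product of fewer than $\pppp$ non-meager $\mathsf{P}$-filters has it, and then show that it together with the $\MP$ suffices to run the homeomorphism construction inside $(2^\omega)^\kappa$ with no appeal to Polishness.
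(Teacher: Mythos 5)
The statement you are addressing is stated in the paper only as a conjecture: the paper proves exactly the forward implication (Theorem \ref{theorem_partial_answer}) and explicitly reports that the converse resisted all attempts. Your assessment of which half is provable, and your identification of the back-and-forth over non-Polish factors as the obstruction to the converse, therefore match the paper's own stance. The problem is with your argument for the forward direction, which has a genuine gap.

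Your primary argument --- pushing a ``defect'' of a single factor $\FF_{\alpha_0}$ through the closed embedding $A\mapsto(A,\omega,\omega,\ldots)$ to contradict the $\CDH$ of the product --- does not work as stated, because countable dense homogeneity is not inherited by closed subspaces: a bad pair of countable dense sets inside the closed copy of $\FF_{\alpha_0}$, padded out by a fixed countable dense set in the remaining coordinates, yields two countable dense subsets of the product, but a self-homeomorphism of the product matching them need not map the closed copy of $\FF_{\alpha_0}$ to itself, so no contradiction is obtained. Your parenthetical alternative is the correct route --- the $\MP$ \emph{is} closed-hereditary, so it suffices to show that $X=\prod_{\alpha\in\kappa}\FF_\alpha$ has the $\MP$ and then apply Theorem \ref{theorem_kunen_medini_zdomskyy} to each factor --- but you never prove that $X$ has the $\MP$, and that is the entire technical content of the paper's proof. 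Concretely, the paper (i) invokes Theorem \ref{theorem_dobrowolski_krupski_marciszewski} to see that a $\CDH$ infinite product of separable spaces is Baire, whence each $\FF_\alpha$ is non-meager since the projections are open; (ii) proves a multivariable generalization of \cite[Lemma 7]{kunen_medini_zdomskyy} (Lemma \ref{lemma_generalized_miller}), via a $\sigma$-centered poset, Bell's theorem, and a simultaneous Talagrand lemma (Lemma \ref{lemma_generalized_talagrand}), showing that any countable crowded $Q\subseteq X$ all of whose projections have pseudointersections in the respective filters contains a crowded subset with compact closure; and (iii) uses the $\CDH$ of $X$ itself to transport an arbitrary countable crowded $Q$ into $\Cof^{\kappa}$, where the pseudointersection hypothesis of (ii) holds for free. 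Step (iii) is essential because at that stage the factors are only known to be non-meager filters, not $\mathsf{P}$-filters, so the combinatorial lemma cannot be applied to $Q$ directly. None of this machinery appears in your sketch, so the forward direction as written is not a proof; the $\kappa<\pppp$ part, by contrast, is fine and is exactly the paper's argument.
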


We will give a proof of one half of the above conjecture (see Theorem \ref{theorem_partial_answer}). Initially, we were rather confident that the other half could be proved by suitable adaptations of techniques from \cite{hernandez_gutierrez_hrusak} and \cite{medini_steprans}; unfortunately, unsurmountable technical difficulties ensued.

As a consequence of a key lemma, we will see that a product of fewer than $\pppp$ non-meager $\mathsf{P}$-filters has the $\MP$ (see Corollary \ref{corollary_generalized_miller}). This prompted us to investigate more generally the preservation of the $\MP$ under intersections and products, which will be accomplished in Sections \ref{section_preservation_intersections} and \ref{section_preservation_products}. As a preliminary to this investigation, we will describe explicitly the connection between the $\MP$ and the classical notion of Miller-measurability (see Proposition \ref{proposition_characterization_miller}). A particularly quotable result is that the intersection of fewer than $\add(m^0)$ non-meager $\mathsf{P}$-filters is a non-meager $\mathsf{P}$-filter, where $m^0$ denotes the ideal of Miller-null sets (see Corollary \ref{corollary_intersection_filters}).

\section{General preliminaries}\label{section_general_preliminaries}

Our reference for set theory is \cite{kunen}. In particular, we will assume familiarity with the cardinals $\pppp$ and $\tttt$, as well as Martin's Axiom. Given $x,y\subseteq\omega$, we will write $x\subseteq^\ast y$ to mean that $x\setminus y$ is finite; we will also write $x=^\ast y$ to mean that $x\subseteq^\ast y$ and $y\subseteq^\ast x$. Given $\XX\subseteq\PP(\omega)$, in accordance with \cite[Definition II.1.21]{kunen}, we will say that $z\subseteq\omega$ is a \emph{pseudointersection} of $\XX$ if $z$ is infinite and $z\subseteq^\ast x$ for every $x\in\XX$. We will use the notation
$$
\Fin=\{x\subseteq\omega:|x|<\omega\}\text{ and }\Cof=\{x\subseteq\omega:|\omega\setminus x|<\omega\}.
$$

Our reference for general topology is \cite{engelking}. We will also assume some familiarity with the basic theory of Polish spaces and their analytic subsets, for which we refer to \cite{kechris}. A space is \emph{crowded} if it is non-empty and it has no isolated points. A space is \emph{scattered} if it has no crowded subspaces. A space is \emph{zero-dimensional} if it is non-empty, and it has a base consisting of clopen sets.\footnote{\,The empty space has dimension $-1$ (see \cite[Section 7.1]{engelking}).} A \emph{compactification} of a space $X$ is a compact space $\gamma X$ such that $X$ is a dense subspace of $\gamma X$. A subset $X$ of a space $Z$ is \emph{meager} if there exist closed subsets $C_n$ of $Z$ with empty interior for $n\in\omega$ such that $X\subseteq\bigcup_{n\in\omega}C_n$. A space $X$ is \emph{meager} if $X$ is a meager subset of $X$. A space $X$ is \emph{Baire} if every non-empty open subspace of $X$ is non-meager. Given spaces $Z$ and $X$, we will say that $X'$ is a \emph{copy of $X$ in $Z$} if $X'$ is a subspace of $Z$ that is homeomorphic to $X$.

The following result is essentially due to Dobrowolski, Krupski and Marciszewski; in fact, it can be obtained by making obvious modifications to the proof of \cite[Theorem 4.1]{dobrowolski_krupski_marciszewski}. Theorem \ref{theorem_dobrowolski_krupski_marciszewski} will be useful in the proof of Theorem \ref{theorem_partial_answer}.

\begin{theorem}\label{theorem_dobrowolski_krupski_marciszewski}
Let $\kappa$ be an infinite cardinal, and let $X_\alpha$ for $\alpha\in\kappa$ be separable spaces such that each $|X_\alpha |\geq 2$. Set $X=\prod_{\alpha\in\kappa}X_\alpha$. If $X$ is $\CDH$ then $X$ is a Baire space.
\end{theorem}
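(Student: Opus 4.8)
The plan is to prove the contrapositive: assuming $X$ is not a Baire space, I will produce countable dense subsets $D,E\subseteq X$ such that no self-homeomorphism of $X$ carries $D$ onto $E$. Two harmless reductions come first. If $\kappa>\cccc$ then $X$ is not separable (a countable dense set would inject $\kappa$ into its power set, separating coordinates by disjoint opens), hence not $\CDH$, and there is nothing to prove; so assume $\kappa\le\cccc$, whence $X$ is separable by the Hewitt--Marczewski--Pondiczery theorem (\cite[Corollary 2.3.16]{engelking}). Next, record two structural facts — immediate from $\kappa$ being infinite and every $|X_\alpha|\ge 2$, and used repeatedly: every nonempty open subset of $X$ is crowded; and inside every nonempty basic open subset of $X$ one can find, over a suitably chosen point on the finitely many constrained coordinates, both a compact crowded ``cube'' $\prod_{\alpha\in C}\{a_\alpha,b_\alpha\}\cong 2^\omega$ (for any countably infinite set $C$ of free coordinates) and a closed copy of the full product over all the free coordinates, which is again an infinite product of nontrivial separable spaces.

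The invariant that separates $D$ from $E$ is of Miller type. Call a countable $A\subseteq X$ \emph{locally Miller} if for every nonempty open $V\subseteq X$ there is a crowded $A'\subseteq A\cap V$ with compact closure in $X$; this is evidently preserved by self-homeomorphisms of $X$. I will build $D$ to be locally Miller and $E$ to fail this at a prescribed nonempty open set $V_0$, meaning that every crowded $E'\subseteq E\cap V_0$ has noncompact closure in $X$. This is enough: if $h$ were a self-homeomorphism with $h[D]=E$, then local Millerness of $D$ applied to the nonempty open set $h^{-1}[V_0]$ yields a crowded $D'\subseteq D\cap h^{-1}[V_0]$ with $\overline{D'}$ compact, so $h[D']\subseteq E\cap V_0$ is crowded with compact closure $h[\overline{D'}]$, a contradiction. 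The set $D$ is built by a routine construction: fix a countably infinite $C\subseteq\kappa$, a countable dense $P\subseteq\prod_{\alpha\in\kappa\setminus C}X_\alpha$, a countable dense $S_\alpha\subseteq X_\alpha$ for each $\alpha\in C$, and a fixed pair in each such $X_\alpha$; then the family $\mathcal K$ of compact cubes obtained by letting the point on $\kappa\setminus C$ range over $P$ and prescribing the cube's two points from the $S_\alpha$ on a finite set of $C$-coordinates (the fixed pair elsewhere) is countable and meets every nonempty basic open subset of $X$. Hence $D:=\bigcup_{K\in\mathcal K}D_K$, with $D_K$ a countable dense subset of $K$, is a countable dense subset of $X$, and for any nonempty open $V$, picking $K\in\mathcal K$ with $K\cap V\ne\emptyset$, the crowded set $D_K\cap V\subseteq D\cap V$ has closure contained in the compact set $K$.

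The heart of the argument is the construction of $E$, and this is where non-Baireness enters. Fix a nonempty open meager $U\subseteq X$, shrink it to a basic open set so that $U\cong W\times Z$ with $Z$ an infinite product of nontrivial separable spaces, put $V_0:=U$, and fix closed nowhere dense sets $F_n\subseteq U$, increasing, with $U=\bigcup_n F_n$. The crucial point is that if $E'\subseteq U$ is crowded with $\overline{E'}^X$ compact, then $\overline{E'}^X\cap U=\bigcup_n(\overline{E'}^X\cap F_n)$ is an open subspace of the compact (hence Baire) space $\overline{E'}^X$, so some $\overline{E'}^X\cap F_n$ has nonempty interior there; intersecting that interior with $E'$ produces a nonempty set that is open in $E'$ — hence crowded — and contained in $F_n$. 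So it suffices to build a countable $E_0\subseteq U$ that is dense in $U$ and meets every $F_n$ in a scattered set, and to let $E$ be $E_0$ together with any countable dense subset of the open set $X\setminus\overline{U}$. In the metrizable (equivalently, second countable) case this is immediate: enumerate a countable base $\{O_i\}$ of $U$ and choose $e_i\in O_i\setminus\bigcup_{j<i}F_j$ — possible since $\bigcup_{j<i}F_j$ is closed nowhere dense in $U$ — so that $E_0=\{e_i:i\in\omega\}$ is dense in $U$ with every $E_0\cap F_n$ finite. For arbitrary separable $X$ there is no countable base, and the recursion must be run with Hewitt--Marczewski--Pondiczery-style bookkeeping, using that the tail $Z$ may be passed to an ever smaller cofinite subproduct to make room; organizing this diagonalization so that $E_0$ stays dense in $U$ while all the $E_0\cap F_n$ remain scattered is the step I expect to be the main obstacle, and it is precisely where the argument follows, with the ``obvious modifications'' mentioned, that of \cite[Theorem 4.1]{dobrowolski_krupski_marciszewski}.
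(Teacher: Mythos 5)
Your overall architecture is reasonable and is in the spirit of what the paper intends (the paper gives no proof of its own here; it simply asserts that the proof of \cite[Theorem 4.1]{dobrowolski_krupski_marciszewski} adapts with ``obvious modifications''). The reduction to $\kappa\le\cccc$, the homeomorphism-invariant of local Miller type, the construction of the good dense set $D$ out of Cantor cubes $\prod_{\alpha\in C}\{a_\alpha,b_\alpha\}$, and the Baire-category computation showing that a crowded $E'\subseteq E\cap U$ with compact closure forces some $E\cap F_n$ to be non-scattered are all correct. But the proposal does not prove the theorem: the step you yourself flag as ``the main obstacle'' --- producing, in a \emph{non-second-countable} product, a countable $E_0$ dense in $U$ with every $E_0\cap F_n$ scattered --- is left entirely to a gesture at \cite{dobrowolski_krupski_marciszewski}, and this step is precisely the non-classical content of the theorem (for second-countable $X$ the statement is essentially the old Fitzpatrick--Zhou/Hru\v{s}\'{a}k--Zamora Avil\'{e}s result). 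A proof that ends where the difficulty begins has a genuine gap.

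To see that this step is not a routine diagonalization, note two concrete obstructions. First, density in $U$ must be verified against uncountably many basic open sets, so the recursion $e_i\in O_i\setminus\bigcup_{j<i}F_j$ against an enumerated base is unavailable; one is forced to build countably many scattered ``layers'' $E_m\subseteq U\setminus F_m$ whose union is dense, and then a single countable scattered layer must witness density for uncountably many basic open sets simultaneously, which defeats naive bookkeeping. Second, the obvious ways of exploiting the product structure fail: writing $U\cong W\times Z$ and taking $E_0=\bigcup_k\{p_k\}\times T_k$ with $\{p_k\}$ dense in $W$ and $T_k$ dense in $Z$ breaks down because a single $F_n$ may contain an entire fibre $\{p_k\}\times Z$ (closed and nowhere dense whenever $W$ is crowded), in which case $E_0\cap F_n$ contains the crowded set $\{p_k\}\times T_k$; and since a countable union of scattered sets need not be scattered, slice-by-slice control does not aggregate. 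Moreover, the argument of \cite{dobrowolski_krupski_marciszewski} that you defer to is written for topological vector spaces and leans on the linear structure where you would need the free coordinates of the product, so it is not even clear that your particular reduction (scattered traces on the $F_n$) is what their method delivers. Until this construction is actually carried out, the proof is incomplete.
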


The following two well-known results will be very useful in the final portion of this article (see \cite[Theorem 7.7 and Corollary 21.23]{kechris} respectively).

\begin{theorem}[Alexandrov, Urysohn]\label{theorem_characterization_baire_space}
Let $X$ be a space. Then the following conditions are equivalent:
\begin{itemize}
\item $X$ is homeomorphic to $\omega^\omega$,
\item $X$ is zero-dimensional, Polish, and no non-empty open subspace of $X$ is compact.
\end{itemize}	
\end{theorem}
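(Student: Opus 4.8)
The plan is to prove the non-trivial implication, from the second condition to the first; the converse is routine, since the basic clopen sets $[s]=\{x\in\omega^\omega:s\subseteq x\}$ for $s\in\omega^{<\omega}$ form a base of $\omega^\omega$ consisting of homeomorphic copies of $\omega^\omega$, none of which is compact, so $\omega^\omega$ is zero-dimensional, Polish, and has no non-empty open compact subspace. So assume that $X$ is zero-dimensional (hence non-empty), Polish, and that no non-empty open subspace of $X$ is compact, and fix a complete metric $d\leq 1$ on $X$.

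The key preliminary is a splitting lemma: if $Y$ is a non-empty, non-compact, zero-dimensional, separable metrizable space and $\epsilon>0$, then $Y$ can be partitioned into infinitely many non-empty clopen sets, each of $d$-diameter at most $\epsilon$. To see this, fix a countable base of clopen subsets of $Y$; its members of diameter at most $\epsilon$ still form a base, hence a cover of $Y$, which has no finite subcover because $Y$ is non-compact. Enumerating this cover as $\{G_k:k\in\omega\}$ and setting $H_k=G_k\setminus\bigcup_{j<k}G_j$ produces pairwise disjoint clopen sets of diameter at most $\epsilon$ whose union is $Y$; if only finitely many $H_k$ were non-empty then $\{G_k\}$ would have a finite subcover, so infinitely many are non-empty, and relabelling them yields the desired partition.

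Next I would build, by recursion on $|s|$, non-empty clopen sets $U_s\subseteq X$ for $s\in\omega^{<\omega}$ such that $U_\emptyset=X$, such that $\{U_{s^\frown n}:n\in\omega\}$ is a partition of $U_s$, and such that $\operatorname{diam}(U_s)\leq 2^{-|s|}$ whenever $s\neq\emptyset$. At each stage $U_s$ is a non-empty clopen subset of $X$, hence a non-empty open subspace of $X$, hence non-compact by hypothesis, so the splitting lemma applies with $\epsilon=2^{-|s|-1}$ and delivers the partition of $U_s$. Given this scheme, for each $x\in\omega^\omega$ the sets $U_{x\re n}$ form a decreasing sequence of non-empty closed subsets of the complete space $X$ with diameters tending to $0$, so $\bigcap_{n\in\omega}U_{x\re n}$ is a singleton; let $h(x)$ denote its unique element. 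I would then verify that $h\colon\omega^\omega\to X$ is a homeomorphism. It is injective, because distinct $x,y$ satisfy $x\re k=y\re k$ but $x\re(k+1)\neq y\re(k+1)$ for some $k$, so that $h(x)$ and $h(y)$ lie in the disjoint sets $U_{x\re(k+1)}$ and $U_{y\re(k+1)}$. It satisfies $h[[s]]=U_s$ for every $s$: the inclusion $\subseteq$ is immediate from $x\re|s|=s$ forcing $h(x)\in U_s$, while given $x_0\in U_s$ the partition property lets one recursively extend $s$ to some $x\in\omega^\omega$ with $x_0\in U_{x\re n}$ for all $n$, whence $h(x)=x_0$. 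Since the sets $[s]$ form a base of $\omega^\omega$ and the $U_s$ are open in $X$, the identity $h[[s]]=U_s$ shows at once that $h$ is a bijection and an open map; and since these sets have diameters tending to $0$ along each branch, it also shows that $h$ is continuous. Therefore $h$ is a homeomorphism.

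I expect the only substantive point to be the splitting lemma, together with the observation that the hypothesis ``no non-empty open subspace is compact'' is exactly what keeps the recursion going: it guarantees that every clopen piece $U_s$ can again be broken into infinitely many clopen pieces of small diameter, which is impossible for a compact clopen piece. The convenient feature is that this hypothesis propagates automatically down the tree, because a clopen subset of a clopen subset of $X$ is again an open subspace of $X$; everything else is bookkeeping with the Cantor intersection theorem.
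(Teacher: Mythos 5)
Your overall strategy --- a Lusin scheme of non-empty clopen sets $U_s$ with $U_s=\bigsqcup_n U_{s^\frown n}$ and shrinking diameters, followed by the Cantor intersection theorem --- is the standard proof of this classical result (the paper itself gives no proof and simply cites \cite[Theorem 7.7]{kechris}), and everything outside the splitting lemma is correct. But the justification of the splitting lemma contains an invalid inference: you claim that the cover of $Y$ by all basic clopen sets of diameter at most $\epsilon$ ``has no finite subcover because $Y$ is non-compact.'' Non-compactness only guarantees that \emph{some} open cover has no finite subcover, not that this particular one does. Concretely, if $\mathrm{diam}(Y)\leq\epsilon$ (which can certainly happen, e.g.\ for $Y=X=\omega^\omega$ with the usual metric bounded by $1$ and $\epsilon=1$), then $Y$ itself may belong to your base and constitute a one-element subcover, even though $Y$ is non-compact. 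So as written the step fails, and with it the claim that infinitely many of the $H_k$ are non-empty.

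The lemma itself is true and the gap is repairable in either of two standard ways. First, you may exploit the completeness you have already fixed: $Y=U_s$ is closed in $(X,d)$, hence $(Y,d)$ is complete, and a complete non-compact metric space is not totally bounded; so there is some $\delta>0$ such that no finite family of sets of diameter less than $\delta$ covers $Y$, and running your disjointification with basic clopen sets of diameter less than $\min(\delta,\epsilon)$ forces infinitely many non-empty pieces. Alternatively, without using completeness: since $Y$ is second countable it is Lindel\"of, so non-compactness yields a countable open cover $\{V_n\}$ with no finite subcover; refine it by basic clopen sets of diameter at most $\epsilon$ each contained in some $V_n$, and observe that a finite subcover of the refinement would induce a finite subcover of $\{V_n\}$. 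With either repair the rest of your argument goes through unchanged.
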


\begin{theorem}[Kechris, Saint Raymond]\label{theorem_kechris_saint_raymond}
Let $Z$ be a Polish space, and let $X\subseteq Z$ be analytic. Then exactly one of the following conditions holds:
\begin{itemize}
\item $X$ is contained in a $\sigma$-compact subspace of $Z$,
\item $X$ contains a closed copy of $\omega^\omega$ in $Z$.
\end{itemize}
\end{theorem}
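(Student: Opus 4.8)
The two alternatives are easily seen to be mutually exclusive: if $X$ were contained in a $\sigma$-compact set $\bigcup_{n\in\omega}K_n\subseteq Z$ and also contained a copy $F$ of $\omega^\omega$ that is closed in $Z$, then $F=\bigcup_{n\in\omega}(F\cap K_n)$ would exhibit $F$ as $\sigma$-compact; but $\omega^\omega$ is not $\sigma$-compact, since it is a Baire space, so a countable union of closed sets covering it would have one of its members contain a basic clopen set $N_s\cong\omega^\omega$ in its interior, and no such set is compact. So the real task is to show that if $X$ is \emph{not} contained in any $\sigma$-compact subset of $Z$, then $X$ contains a closed copy of $\omega^\omega$ (the case $X=\emptyset$ being vacuous).

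To this end I would fix a continuous surjection $f\colon\omega^\omega\longrightarrow X$ and a compactification $\widehat{Z}\supseteq Z$ in which $Z$ is a $G_\delta$ set (via an embedding into the Hilbert cube), and build by recursion on $|s|$ a ``doubled'' Cantor--Lusin scheme: basic clopen sets $V_s\subseteq\omega^\omega$ and non-empty open sets $U_s\subseteq Z$ for $s\in\omega^{<\omega}$, with $f[V_s]\subseteq U_s$, with $V_s$ and $U_s$ of diameter tending to $0$ along each branch (measured with a complete metric on $\omega^\omega$ and with the metric of $\widehat{Z}$), with $V_{s^\frown k}\subseteq V_s$ and $\overline{U_{s^\frown k}}\subseteq U_s$, with the closures $\overline{U_{s^\frown k}}$ pairwise disjoint for fixed $s$, with each $s$ having infinitely many immediate successors in the scheme, and --- this is the decisive clause --- with a point $\zeta_s\in\widehat{Z}\setminus Z$ such that the closures of the $U_{s^\frown k}$ taken in $\widehat{Z}$ shrink to $\{\zeta_s\}$. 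Granting such a scheme, completeness yields for each $x\in\omega^\omega$ a unique point $y_x\in\bigcap_n V_{x\re n}$, and then $\pi(x):=f(y_x)$ is the unique point of $\bigcap_n\overline{U_{x\re n}}$; one checks that $\pi$ is a continuous injection of $\omega^\omega$ onto $F:=\pi[\omega^\omega]=\bigcap_{n\in\omega}\bigcup_{s\in\omega^n}U_s\subseteq X$. Since $F$ is then a $G_\delta$ subset of $Z$, hence Polish, and since the sets $F\cap U_s$ form a clopen base of $F$ with each of them partitioned into the infinitely many pairwise disjoint non-empty clopen sets $F\cap U_{s^\frown k}$ (so that no non-empty open subspace of $F$ is compact), Theorem~\ref{theorem_characterization_baire_space} gives $F\cong\omega^\omega$. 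That $F$ is closed in $Z$ is exactly what the decisive clause buys: a point of $Z$ lying in the closure of $F$ is, by the shrinking diameters, either $\pi(x)$ for a genuine branch $x$, or else a limit of points drifting through infinitely many of the $U_{s^\frown k}$ for a fixed $s$, hence equal to $\zeta_s\notin Z$ --- impossible.

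The recursion will be driven by the invariant that the analytic set $A_s:=f[V_s]$ is never contained in a $\sigma$-compact subset of $Z$; this holds at the root and must be reinstated at every child. The step from $s$ to its children rests on the following observation: if $C$ denotes the set of all $\zeta\in\widehat{Z}$ such that $A_s\cap W$ fails to be contained in a $\sigma$-compact subset of $Z$ for every neighbourhood $W$ of $\zeta$, then $C$ is a non-empty compact subset of $\widehat{Z}$ (a finite-subcover argument, using compactness of $\widehat{Z}$) that cannot be contained in $Z$ (otherwise $A_s$ would be covered by the compact set $A_s\cap C$ together with a $\sigma$-compact set coming from a countable subcover of the $\sigma$-compact open set $\widehat{Z}\setminus C$). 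Picking $\zeta_s\in C\setminus Z$, one then has at one's disposal, inside every neighbourhood of $\zeta_s$, a relatively open piece of $A_s$ that is still not contained in any $\sigma$-compact subset of $Z$; one distributes this non-$\sigma$-compactness among infinitely many pairwise disjoint open sets $U_{s^\frown k}$ sitting in ever-smaller neighbourhoods of $\zeta_s$, arranged so as to meet the diameter and disjointness requirements above while keeping each $A_s\cap U_{s^\frown k}$ outside every $\sigma$-compact subset of $Z$, and pulling these back through $f$ and shrinking produces the $V_{s^\frown k}$.

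The main obstacle is precisely this last bookkeeping: one must simultaneously force the localizing neighbourhoods to escape out of $Z$ (which is what makes the limiting body $F$ closed in $Z$, and which is vacuous in the classical $F_\sigma$ case treated by Hurewicz) and propagate ``not contained in a $\sigma$-compact subset of $Z$'' all the way down the tree, with the right disjointness and diameter control; reconciling these two demands at every node is the technical heart of the argument --- in essence, Saint Raymond's contribution over Hurewicz's. An arguably cleaner alternative would bypass the explicit scheme: play the Hurewicz covering game associated with $X$, unfolded along a tree representation of the analytic set $X$ so that the payoff becomes closed for one of the players; Gale--Stewart determinacy then yields either a winning strategy for one player, which assembles a $\sigma$-compact set containing $X$, or a winning strategy for the other, which directly codes a closed copy of $\omega^\omega$ inside $X$.
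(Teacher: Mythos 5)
Your proposal is essentially correct, but it takes a genuinely different route from the paper: the paper does not prove this statement at all, quoting it as \cite[Corollary 21.23]{kechris}, where the argument runs through the unfolded Hurewicz $\ast\ast$-game and Gale--Stewart determinacy of closed games --- precisely the ``cleaner alternative'' you mention in your last sentence. Your main line is instead the direct Cantor--Lusin scheme with the condensation-point derivative, closer to Saint Raymond's original approach, and it is sound: the exclusivity argument is fine; the set $C$ of points all of whose neighbourhoods meet $A_s$ in a non-$\sigma$-compactly-coverable set is indeed closed, non-empty (finite subcover), and not contained in $Z$; the escape points $\zeta_s\in\widehat{Z}\setminus Z$ do force $F$ to be closed in $Z$ via the well-founded/ill-founded dichotomy on the tree of nodes visited infinitely often; and Theorem \ref{theorem_characterization_baire_space} correctly identifies $F$. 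It is worth making explicit that analyticity enters exactly at the refinement step --- writing $f[V_s]\cap U$ as a countable union $\bigcup_j f[V^j]$ over basic clopen $V^j\subseteq f^{-1}[U]\cap V_s$ and selecting a non-coverable piece --- since for arbitrary $X$ (e.g.\ a Bernstein set) the dichotomy fails, so this is the step that cannot be dispensed with. Two small repairs: first, $A_s\cap C$ need not be compact; the correct statement is that $A_s\cap C$ is contained in the compact set $C\subseteq Z$, while $A_s\setminus C$ is coverable by Lindel\"{o}fness of $\widehat{Z}\setminus C$. Second, the ``distribution'' of non-coverability among the children deserves one more line: fixing neighbourhoods $W_0\supseteq W_1\supseteq\cdots$ of $\zeta_s$ with $\bigcap_n\overline{W_n}=\{\zeta_s\}$, one has $A_s\cap\bigcap_n W_n=\varnothing$ because $\zeta_s\notin Z$, so $A_s\cap W_0=\bigcup_n A_s\cap(W_n\setminus W_{n+1})$ and infinitely many annuli must carry non-coverable pieces of $A_s$, from which the pairwise disjoint $U_{s^\frown k}$ converging to $\zeta_s$ are extracted. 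With these points filled in, your argument is a complete and correct proof; what the game-theoretic proof in the cited source buys is mainly brevity once determinacy is available, whereas your scheme is more elementary and makes the closedness mechanism (the points $\zeta_s$ leaving $Z$) completely explicit.
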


Finally, we state a simple lemma which will be convenient in certain proofs that involve the $\MP$ (it can be proved exactly like \cite[Lemma 3.2]{medini_zdomskyy}).

\begin{lemma}\label{lemma_crowded}
Let $K$ be a compact metrizable space, and let $D$ be a dense subspace of $K$. If $N$ is a closed copy of $\omega^\omega$ in $K\setminus D$ then $\cl(N)\cap D$ is crowded, where $\cl$ denotes closure in $K$.
\end{lemma}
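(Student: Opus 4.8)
The plan is to first unwind the hypothesis. Since $N$ is closed in $K\setminus D$ we have $\cl(N)\cap(K\setminus D)=N$, so that $\cl(N)\cap D=\cl(N)\setminus N$; note also that $\cl(N)$ is compact, being closed in the compact space $K$. Non-emptiness of $\cl(N)\cap D$ then comes for free: were it empty, we would have $\cl(N)=\cl(N)\cap(K\setminus D)=N$, making $N$ a closed, hence compact, subspace of $K$ --- impossible since $N\cong\omega^\omega$. So the substantive part is to show that $\cl(N)\cap D$ has no isolated points.

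Assume towards a contradiction that $x\in\cl(N)\cap D$ is isolated in $\cl(N)\cap D$. Using that $K$ is metrizable (hence regular), I would pick an open neighbourhood $U$ of $x$ in $K$ small enough that $\cl(U)\cap\cl(N)\cap D=\{x\}$; the point of working with $\cl(U)$ rather than $U$ is that $L:=\cl(U)\cap\cl(N)$ is then compact (and metrizable). Next I would show that $L=(L\cap N)\cup\{x\}$, with $x\notin N$: indeed $x\notin N$ since $N\subseteq K\setminus D$ and $x\in D$; moreover $L\cap D\subseteq\cl(U)\cap\cl(N)\cap D=\{x\}$ while $x\in U\cap\cl(N)\subseteq L$, so $L\cap D=\{x\}$; and finally $L\setminus\{x\}=L\cap(K\setminus D)=L\cap\cl(N)\cap(K\setminus D)=L\cap N$ by the first paragraph.

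Now $L\cap N=L\setminus\{x\}$ is an open subset of the compact metrizable space $L$, hence $F_\sigma$ in $L$ and therefore $\sigma$-compact; it is also closed in $N$, since $L$ is closed in $K$. On the other hand $N\cap U$ is a non-empty (because $x\in\cl(N)$ and $U$ is a neighbourhood of $x$) open subset of $N$ contained in $L\cap N$. Transporting along a homeomorphism $N\cong\omega^\omega$, the non-empty open set $N\cap U$ contains a non-empty clopen subset $M_0$ of $N$ homeomorphic to $\omega^\omega$; then $M_0$ is closed in $N$, hence closed in $L\cap N$, so $M_0$ is $\sigma$-compact as a closed subspace of a $\sigma$-compact space. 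This contradicts the fact that $\omega^\omega$ is not $\sigma$-compact (for instance by Theorem~\ref{theorem_kechris_saint_raymond}, since $\omega^\omega$ is a closed copy of itself), and the proof is complete.

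The only delicate point I anticipate is the bookkeeping in the middle step: one must shrink $U$ so as to control $\cl(U)\cap\cl(N)$ (not merely $U\cap\cl(N)$), so that $L$ is genuinely compact and $L\setminus\{x\}$ lands inside $N$; given that, the rest is routine and essentially reduces to the well-known non-$\sigma$-compactness of $\omega^\omega$.
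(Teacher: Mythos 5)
Your proof is correct, and it is essentially the standard argument that the paper has in mind (the paper gives no proof of its own, deferring to \cite[Lemma 3.2]{medini_zdomskyy}): localize at a putative isolated point $x$ of $\cl(N)\cap D$, observe that the trace of $N$ there would then be $\sigma$-compact, and contradict the fact that non-empty clopen subsets of $\omega^\omega$ are copies of $\omega^\omega$. All the delicate steps check out --- in particular the identity $\cl(N)\cap(K\setminus D)=N$, the choice of $U$ with $\cl(U)\cap\cl(N)\cap D=\{x\}$ via regularity, and the passage to a clopen $M_0\subseteq N\cap U$ so that $\sigma$-compactness is inherited; the only (harmless) overkill is invoking non-$\sigma$-compactness of $\omega^\omega$ where the direct version of the argument gets by with plain non-compactness of small clopen pieces.
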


\section{Preliminaries concerning filters}\label{section_preliminaries_filters}

Recall that a \emph{filter} is a collection $\FF\subseteq\PP(\omega)$ that satisfies the following conditions:
\begin{itemize}
\item $\varnothing\notin\FF$ and $\omega\in\FF$,
\item If $x\in\FF$ and $x=^\ast y\subseteq\omega$ then $y\in\FF$,
\item If $x\in\FF$ and $x\subseteq y\subseteq\omega$ then $y\in\FF$,
\item If $x,y\in\FF$ then $x\cap y\in\FF$.
\end{itemize}
We will use the same terminology when we consider a \emph{filter} on a forcing poset $\PPP$, in which case however we refer to \cite[Definition III.3.10]{kunen}.

As usual, we will identify every subset of $\omega$ with its characteristic function, so that every $\XX\subseteq\PP(\omega)$ will be endowed with the subspace topology that it inherits from $2^\omega$. Naturally, the relevant case here is when $\XX=\FF$ is a filter. In particular, by \emph{non-meager filter} we mean a filter $\FF$ that is a non-meager space with this topology. Since every filter is dense in $2^\omega$, this is equivalent to $\FF$ being a non-meager subset of $2^\omega$. The following well-known characterization of non-meager filters originally appeared as part of \cite[Th\'{e}or\`{e}me 21]{talagrand}.

\begin{lemma}[Talagrand]\label{lemma_talagrand}
Let $\FF$ be a filter. Then the following conditions are equivalent:
\begin{itemize}
\item $\FF$ is non-meager,
\item Given pairwise disjoint $a_n\in\Fin$ for $n\in\omega$, there exist $z\in\FF$ and $I\in [\omega]^\omega$ such that $z\cap a_n=\varnothing$ for every $n\in I$.
\end{itemize}
\end{lemma}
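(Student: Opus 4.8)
The plan is to prove both implications directly from the definitions, using the standard bijection between $2^\omega$ and $\prod_{n\in\omega}2^{a_n}$ afforded by a partition of $\omega$ into finite intervals. First I would establish the contrapositive of the forward direction: suppose that for some pairwise disjoint family $\{a_n : n\in\omega\}$ of finite sets there is \emph{no} $z\in\FF$ that misses $a_n$ for infinitely many $n$; I want to conclude that $\FF$ is meager. Without loss of generality the $a_n$ may be taken to be a partition of $\omega$ into consecutive finite intervals (shrinking the sets only makes the hypothesis stronger, and adding the leftover points to, say, $a_0$ or interleaving them changes nothing essential). The hypothesis says precisely that every $z\in\FF$ satisfies $z\cap a_n\neq\varnothing$ for all but finitely many $n$. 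Hence $\FF\subseteq\bigcup_{m\in\omega}C_m$, where $C_m=\{x\subseteq\omega : x\cap a_n\neq\varnothing \text{ for all } n\geq m\}$. Each $C_m$ is closed in $2^\omega$ (it is an intersection of the clopen conditions ``$x\cap a_n\neq\varnothing$'' over $n\geq m$), and each $C_m$ has empty interior, since any basic clopen set, being determined by finitely many coordinates, contains a point $x$ with $x\cap a_n=\varnothing$ for some large $n\geq m$. Therefore $\FF$ is meager.

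For the reverse direction I would argue again by contraposition: assume $\FF$ is meager and produce a partition witnessing the failure of the combinatorial condition. Write $\FF\subseteq\bigcup_{k\in\omega}F_k$ with each $F_k$ closed with empty interior and $F_k\subseteq F_{k+1}$. Because $\FF$ contains $\Cof$ and is closed under finite intersections with cofinite sets, a fusion/recursion produces a strictly increasing sequence $0=n_0<n_1<\cdots$ such that, setting $a_k=[n_k,n_{k+1})$, the closed set $F_k$ contains no $x$ with $x\cap([n_k,\omega))=\varnothing$ extending the finite condition read off so far; concretely, one arranges that for every $s\in 2^{n_k}$ there is $t\in 2^{[n_k,n_{k+1})}$ with $s^\frown t^\frown(\text{all }0\text{'s})\notin F_k$, which is possible because $F_k$ has empty interior. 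The point of this construction is: if $z\in\FF$ and $I\in[\omega]^\omega$ satisfied $z\cap a_k=\varnothing$ for all $k\in I$, then $z$ would have to avoid $F_k$ for every $k$, since on the block $a_k$ the string $z\re a_k$ is the all-zero string, which (by the choice of $n_{k+1}$) forces membership in the complement of $F_k$; as $k\to\infty$ through $I$ this contradicts $z\in\FF\subseteq\bigcup_k F_k$ and the fact that once $z\in F_{k_0}$ we have $z\in F_k$ for all $k\geq k_0$.

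The main obstacle is bookkeeping in the recursion of the second paragraph: one has to be careful that ``emptiness of interior of $F_k$'' is used in the correct, uniform-over-$s$ form — namely that for \emph{every} finite stem $s$ there is a finite extension staying outside $F_k$ — and that the resulting blocks $a_k$ really do force $z\notin F_k$ whenever $z$ is zero on the whole block. A clean way to package this is to note that emptiness of interior is equivalent to the complement being dense open, intersect countably many such dense open sets along the fusion, and observe that ``being zero on block $a_k$'' is exactly the basic clopen condition that, by density, can be chosen inside the complement of $F_k$. I would also remark at the end that, since $\FF$ is dense in $2^\omega$, being a non-meager space and being a non-meager subset of $2^\omega$ coincide, so the two formulations of the lemma match; and I would cite \cite{talagrand} (or \cite{bartoszynski_judah}) for the original argument.
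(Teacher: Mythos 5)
Your forward direction is fine: the decomposition $\FF\subseteq\bigcup_{m}C_m$ with $C_m=\{x\subseteq\omega:x\cap a_n\neq\varnothing\text{ for all }n\geq m\}$ is the standard argument, each $C_m$ being closed with empty interior because all but finitely many $a_n$ avoid any prescribed finite set of coordinates. (The paper does not prove this lemma --- it cites Talagrand's Th\'{e}or\`{e}me 21 --- so the only issue is whether your argument stands on its own.)

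The reverse direction has a genuine gap at the step where you claim that ``on the block $a_k$ the string $z\re a_k$ is the all-zero string, which (by the choice of $n_{k+1}$) forces membership in the complement of $F_k$.'' No choice of $n_{k+1}$ can arrange this. The condition ``$x\cap[n_k,n_{k+1})=\varnothing$'' leaves every coordinate outside the block free, so it is not a basic clopen set, and nowhere density of $F_k$ only produces \emph{some} basic clopen subset of a given basic clopen set missing $F_k$; it does not let you prescribe that this subset be ``all zeros on the block.'' Concretely, the set $\{\omega\setminus[n_k,m):m>n_k\}\cup\{[0,n_k)\}$ is closed and nowhere dense, yet for every $m$ it contains a member of $\Cof\subseteq\FF$ vanishing on $[n_k,m)$; so $F_k$ may meet $\FF\cap\{x:x\cap[n_k,m)=\varnothing\}$ for every $m$, and the implication you want is simply false. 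The repair is the actual Talagrand argument, which must use upward closure of the filter --- a property your sketch never invokes, and which is indispensable, since the meager non-filter $\Fin\cup\Cof$ satisfies the combinatorial condition trivially via $z=\varnothing$. Namely: choose $n_{k+1}$ so that for \emph{every} $s\subseteq[0,n_k)$ there is $t_s\subseteq[n_k,n_{k+1})$ with the basic clopen set $\{x:x\cap[0,n_{k+1})=s\cup t_s\}$ disjoint from $F_k$ (possible since there are finitely many $s$ and $2^\omega\setminus F_k$ is dense open). Given $z\in\FF$ with $z\cap a_k=\varnothing$ for all $k$ in an infinite $I$, recursively insert the appropriate $t$'s into the blocks $a_k$ for $k\in I$ to build a set $Y$; since those blocks are disjoint from $z$, the insertion only adds points, so $Y\supseteq z$ and hence $Y\in\FF$, while $Y\notin F_k$ for every $k\in I$ and hence $Y\notin\bigcup_k F_k$ by monotonicity of the $F_k$ --- the desired contradiction.
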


In fact, given a sufficiently small family of non-meager filters, a single $I$ can be found that works for every filter. This is the content of the following result, which will be useful in the proof of Lemma \ref{lemma_generalized_miller}.

\begin{lemma}\label{lemma_generalized_talagrand}
Let $\kappa<\tttt$ be a cardinal, and let $\FF_\alpha$ be non-meager filters for $\alpha\in\kappa$. Then, given pairwise disjoint $a_n\in\Fin$ for $n\in\omega$, there exist $z_\alpha\in\FF_\alpha$ for $\alpha\in\kappa$ and $I\in [\omega]^\omega$ such that $z_\alpha\cap a_n=\varnothing$ for every $\alpha\in\kappa$ and $n\in I$.
\end{lemma}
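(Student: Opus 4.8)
The plan is to fix pairwise disjoint $a_n\in\Fin$ for $n\in\omega$ and then iterate Talagrand's characterization (Lemma \ref{lemma_talagrand}) while progressively shrinking the witnessing infinite set, using the hypothesis $\kappa<\tttt$ to survive the diagonalization. First I would enumerate $\kappa=\{\alpha_\xi:\xi<\lambda\}$ for the appropriate ordinal $\lambda$ and build a $\subseteq^\ast$-decreasing sequence $\langle I_\xi:\xi<\lambda\rangle$ of infinite subsets of $\omega$ together with sets $z_{\alpha_\xi}\in\FF_{\alpha_\xi}$, as follows. Set $I_{-1}=\omega$ (or $I_0=\omega$, with appropriate indexing). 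At a successor stage $\xi+1$, apply Lemma \ref{lemma_talagrand} to the filter $\FF_{\alpha_\xi}$ and the pairwise disjoint finite sets $\langle a_n:n\in I_\xi\rangle$: this yields $z_{\alpha_\xi}\in\FF_{\alpha_\xi}$ and an infinite $I_{\xi+1}\subseteq I_\xi$ with $z_{\alpha_\xi}\cap a_n=\varnothing$ for every $n\in I_{\xi+1}$. At a limit stage $\xi<\lambda$, I need an infinite pseudointersection of $\langle I_\eta:\eta<\xi\rangle$; since this is a $\subseteq^\ast$-decreasing tower of length $|\xi|\le\kappa<\tttt$, the definition of $\tttt$ (the tower number) guarantees such a pseudointersection $I_\xi$ exists, and I fix one. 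After all $\lambda$ steps are complete, I take $I=I_\lambda$ to be a pseudointersection of the full tower $\langle I_\xi:\xi<\lambda\rangle$, which again exists because $\lambda\le\kappa<\tttt$.

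The verification is then routine: for each $\alpha=\alpha_\xi\in\kappa$ we have $I\subseteq^\ast I_{\xi+1}$, and since $z_\alpha\cap a_n=\varnothing$ for all $n\in I_{\xi+1}$, it follows that $z_\alpha\cap a_n=\varnothing$ for all but finitely many $n\in I$. To repair the finitely many bad indices, I replace $I$ by a suitable cofinite subset of itself at the end — but since there are $\kappa$-many filters, I cannot remove finitely many points for each $\alpha$ separately. Instead, the cleaner fix is to note that at the limit/final stage, rather than taking an arbitrary pseudointersection, I arrange $I_\lambda\subseteq I_{\xi+1}$ genuinely (not just $\subseteq^\ast$) is not possible either; so the right move is to absorb the finite error into the $z_\alpha$'s themselves. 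Concretely, once $I$ is chosen, for each $\alpha=\alpha_\xi$ let $F_\alpha=\{n\in I: z_\alpha\cap a_n\ne\varnothing\}$, which is finite, and replace $z_\alpha$ by $z_\alpha\setminus\bigcup_{n\in F_\alpha}a_n$; since each $a_n$ is finite and $F_\alpha$ is finite, this modification removes only finitely many elements, so the new $z_\alpha$ still lies in $\FF_\alpha$ (filters are closed under $=^\ast$), and now $z_\alpha\cap a_n=\varnothing$ for every $n\in I$, as required.

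The main obstacle is purely bookkeeping: making sure the cardinal arithmetic at limit stages is handled correctly so that $\tttt$ is actually invoked with a tower of the right length (at every stage $\le\lambda\le\kappa<\tttt$, not accidentally $\kappa^+$ or similar), and making sure the final "clean-up" of the finitely many bad indices is done uniformly rather than $\alpha$ by $\alpha$ on the index set $I$. One should also double-check that the sets $z_\alpha$ produced by Lemma \ref{lemma_talagrand} can indeed be taken to satisfy $z_\alpha\cap a_n=\varnothing$ on an infinite subset of the \emph{current} $I_\xi$ rather than of all of $\omega$ — this is immediate since $\langle a_n:n\in I_\xi\rangle$ is itself a sequence of pairwise disjoint finite sets and Lemma \ref{lemma_talagrand} applies verbatim to it. No genuinely new idea beyond the tower-diagonalization is needed; the content is entirely in organizing the recursion.
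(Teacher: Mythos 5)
Your proposal is correct and takes essentially the same route as the paper: build a $\subseteq^\ast$-decreasing tower of witnessing index sets by iterating Lemma \ref{lemma_talagrand}, use $\kappa<\tttt$ to extract a pseudointersection $I$, and then repair the finitely many bad indices for each $\alpha$ by deleting the corresponding finitely many $a_n$ from $z_\alpha$ (which keeps $z_\alpha\in\FF_\alpha$ since filters are closed under finite modification), exactly as the paper does with its $z_\alpha=w_\alpha\setminus\bigcup_{n\in n_\alpha}a_n$. The brief detour in your write-up about shrinking $I$ instead is correctly abandoned, and the final argument is sound.
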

\begin{proof}
Let $a_n\in\Fin$ for $n\in\omega$ be pairwise disjoint. Using Lemma \ref{lemma_talagrand} together with the assumption $\kappa<\tttt$, one can easily construct $w_\alpha\in\FF_\alpha$ and $I_\alpha\in [\omega]^\omega$ for $\alpha\in\kappa$ such that the following conditions are satisfied:
\begin{itemize}
\item $w_\alpha\cap a_n=\varnothing$ for every $\alpha\in\kappa$ and $n\in I_\alpha$,
\item $I_\beta\subseteq^\ast I_\alpha$ whenever $\alpha\leq\beta$.
\end{itemize}

Once the construction is completed, let $I$ be a pseudointersection of $\{I_\alpha:\alpha\in\kappa\}$. This means that we can fix $n_\alpha\in\omega$ for $\alpha\in\kappa$ such that $w_\alpha\cap a_n =\varnothing$ for every $n\in I\setminus n_\alpha$. Finally, set
$$
z_\alpha=w_\alpha\setminus\bigcup_{n\in n_\alpha}a_n.
$$
for $\alpha\in\kappa$. It is easy to realize that the $z_\alpha$ and $I$ are as desired.
\end{proof}

A filter $\FF$ is a \emph{$\mathsf{P}$-filter} if every countable $\XX\subseteq\FF$ has a pseudointersection in $\FF$. Given $z\in [\omega]^\omega$, the filter $\{x\subseteq\omega:z\subseteq^\ast x\}$ is a trivial example of (meager) $\mathsf{P}$-filter. While it is not hard to construct non-meager $\mathsf{P}$-filters under additional set-theoretic assumptions, their existence in $\ZFC$ is a long-standing open problem (see \cite{just_mathias_prikry_simon}).

The following result is essentially due to Shelah; in fact, it is a slight generalization of \cite[Fact 4.3 on page 327]{shelah}, and it can be proved in the same way (see also \cite[Lemma 2.5]{hernandez_gutierrez_hrusak}). The first part of Proposition \ref{proposition_generalized_shelah} (which is a trivial exercise) will be useful in the proof of Theorem \ref{theorem_partial_answer}, while the full result will be relevant in Section \ref{section_preservation_products} (see the remark that follows Theorem \ref{theorem_product_miller}).

\begin{proposition}\label{proposition_generalized_shelah}
Let $\kappa\leq\omega$ be a non-zero cardinal, and let $\FF_\alpha$ be filters for $\alpha\in\kappa$. Set $X=\prod_{\alpha\in\kappa}\FF_\alpha$. Then $X$ is homeomorphic to a filter. Furthermore, if each $\FF_\alpha$ is a non-meager $\mathsf{P}$-filter then $X$ is homeomorphic to a non-meager $\mathsf{P}$-filter.
\end{proposition}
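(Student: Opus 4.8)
The plan is to handle the two assertions in turn, treating the finite case and the countably infinite case $\kappa=\omega$ uniformly where possible. For the first assertion, that $X=\prod_{\alpha\in\kappa}\FF_\alpha$ is homeomorphic to a filter, I would use a partition of $\omega$ into $\kappa$ many infinite pieces: fix pairwise disjoint infinite sets $\omega_\alpha\subseteq\omega$ for $\alpha\in\kappa$ with $\bigcup_{\alpha\in\kappa}\omega_\alpha=\omega$, and fix bijections $e_\alpha\colon\omega\to\omega_\alpha$. Each $e_\alpha$ induces a homeomorphic embedding $\PP(\omega)\to\PP(\omega_\alpha)\subseteq\PP(\omega)$ (as subspaces of $2^\omega$ via characteristic functions), and hence an embedding of $\prod_{\alpha\in\kappa}\PP(\omega)$ into $\PP(\omega)$ sending $(x_\alpha)_{\alpha\in\kappa}$ to $\bigcup_{\alpha\in\kappa}e_\alpha[x_\alpha]$; this map is a homeomorphism onto its image, which is all of $2^\omega$ when $\kappa$ is finite and a closed copy of $\prod_{\alpha\in\kappa}2^\omega$ when $\kappa=\omega$. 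The image of $\prod_{\alpha\in\kappa}\FF_\alpha$ under this map is a homeomorphic copy of $X$; to promote it to an actual filter on $\omega$, I would take $\FF=\{y\subseteq\omega: e_\alpha^{-1}[y\cap\omega_\alpha]\in\FF_\alpha\text{ for every }\alpha\in\kappa\}$ and check directly that $\FF$ satisfies the four filter axioms (this uses that each $\FF_\alpha$ is a filter, and that $=^\ast$-modifications of $y$ affect only finitely many coordinate-traces, each by a $=^\ast$-modification or not at all) and that $y\mapsto(e_\alpha^{-1}[y\cap\omega_\alpha])_{\alpha\in\kappa}$ restricts to a homeomorphism from $\FF$ onto $X$. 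When $\kappa$ is finite this last point is immediate; when $\kappa=\omega$ one must note that $\FF$ as a subspace of $2^\omega$ coincides with the closed copy of $\prod_{\alpha}\FF_\alpha$ described above, because membership in $\FF$ is a countable conjunction of conditions each depending on coordinates in a single $\omega_\alpha$.

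For the second assertion, suppose each $\FF_\alpha$ is a non-meager $\mathsf{P}$-filter; I would verify that the filter $\FF$ constructed above is again a non-meager $\mathsf{P}$-filter. For the $\mathsf{P}$-filter property: given countably many $y_n\in\FF$, for each $\alpha$ the traces $e_\alpha^{-1}[y_n\cap\omega_\alpha]$ lie in $\FF_\alpha$, so by the $\mathsf{P}$-filter property of $\FF_\alpha$ there is a pseudointersection $z_\alpha\in\FF_\alpha$ of $\{e_\alpha^{-1}[y_n\cap\omega_\alpha]:n\in\omega\}$; then $z=\bigcup_{\alpha\in\kappa}e_\alpha[z_\alpha]$ is in $\FF$, and I claim $z\subseteq^\ast y_n$ for every $n$. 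When $\kappa$ is finite this is clear since a finite union of finite sets is finite. When $\kappa=\omega$ this requires a diagonalization: a priori $z\setminus y_n$ could pick up one point from each of infinitely many blocks $\omega_\alpha$. The fix is the standard one — before taking $z_\alpha$, thin it so that $z_\alpha\cap e_\alpha^{-1}[y_\alpha\cap\omega_\alpha]\subseteq^\ast$ nothing extra... more precisely, replace $z_\alpha$ by $z_\alpha\setminus k_\alpha$ for a suitable $k_\alpha\in\omega$ chosen so that $e_\alpha[z_\alpha\setminus k_\alpha]\cap\bigcup_{m\le\alpha}(\omega_\alpha\setminus y_m)=\varnothing$; since $z_\alpha\setminus k_\alpha$ is still infinite it still lies in $\FF_\alpha$, and now for each fixed $n$ the set $z\setminus y_n$ meets only the blocks $\omega_\alpha$ with $\alpha<n$, within each of which it is finite, so $z\setminus y_n$ is finite. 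This is exactly the argument of \cite[Fact 4.3 on page 327]{shelah} / \cite[Lemma 2.5]{hernandez_gutierrez_hrusak}, and I would cite it rather than reproduce every index.

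For non-meagerness of $\FF$ I would apply Talagrand's criterion (Lemma \ref{lemma_talagrand}). Given pairwise disjoint $a_n\in\Fin$ for $n\in\omega$, I want $z\in\FF$ and $I\in[\omega]^\omega$ with $z\cap a_n=\varnothing$ for all $n\in I$. The natural move is to apply Lemma \ref{lemma_generalized_talagrand} to the family $\{\FF_\alpha:\alpha\in\kappa\}$, which is legitimate since $\kappa\le\omega<\tttt$, using the pulled-back finite sets $e_\alpha^{-1}[a_n\cap\omega_\alpha]$ (dropping those that are empty, and reindexing so that for each $\alpha$ one gets a genuine sequence of pairwise disjoint finite sets indexed by an infinite subset of $\omega$): this yields $z_\alpha\in\FF_\alpha$ and a single $I\in[\omega]^\omega$ with $z_\alpha\cap e_\alpha^{-1}[a_n\cap\omega_\alpha]=\varnothing$ for every $\alpha$ and $n\in I$, whence $z=\bigcup_{\alpha}e_\alpha[z_\alpha]\in\FF$ satisfies $z\cap a_n=\varnothing$ for $n\in I$. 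The main obstacle, and the only genuinely delicate point, is the same diagonalization issue that arises for the $\mathsf{P}$-filter property: when $\kappa=\omega$, $a_n$ may be spread across infinitely many blocks, so some bookkeeping is needed to ensure that the sets $e_\alpha^{-1}[a_n\cap\omega_\alpha]$ fed into Lemma \ref{lemma_generalized_talagrand} are honestly pairwise disjoint finite sets for each fixed $\alpha$ and that the resulting $I$ can be taken common to all $\alpha$ — but Lemma \ref{lemma_generalized_talagrand} was stated precisely to absorb this, so once the traces are arranged correctly the conclusion follows. Since the statement asserts this result is a slight generalization of \cite[Fact 4.3]{shelah} "proved in the same way," I would keep the write-up brief, emphasizing the partition construction and citing Shelah's argument for the routine diagonalizations.
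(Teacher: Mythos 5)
The paper gives no proof of Proposition \ref{proposition_generalized_shelah}: it cites Shelah's Fact 4.3 and \cite[Lemma 2.5]{hernandez_gutierrez_hrusak}, and your block-partition construction of the filter $\FF=\{y\subseteq\omega:e_\alpha^{-1}[y\cap\omega_\alpha]\in\FF_\alpha\text{ for all }\alpha\in\kappa\}$ is exactly the intended argument. The filter axioms, the homeomorphism with $X$, and the $\mathsf{P}$-filter verification (with the $k_\alpha$-thinning) are all correct. The one step that needs more care is the non-meagerness of $\FF$. Lemma \ref{lemma_generalized_talagrand} as stated feeds the \emph{same} sequence $(a_n)_{n\in\omega}$ to every filter, whereas you need to feed $\FF_\alpha$ its own sequence of traces $a_n^\alpha=e_\alpha^{-1}[a_n\cap\omega_\alpha]$. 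Your proposed fix --- dropping the empty traces and reindexing separately for each $\alpha$ --- is counterproductive: since $z\cap a_n=\bigcup_\alpha e_\alpha[z_\alpha\cap a_n^\alpha]$, the whole point is that $z\cap a_n=\varnothing$ iff $z_\alpha\cap a_n^\alpha=\varnothing$ for \emph{every} $\alpha$ simultaneously, so a common index set $I$ referring to the original $n$'s is essential, and after per-$\alpha$ reindexing the ``single $I$'' no longer means this; moreover, for some $\alpha$ only finitely many traces may be nonempty, so there may be nothing to reindex onto. The correct repair is simply to keep the original indices, observe that empty members of $\Fin$ are harmless in Talagrand's criterion (and in Lemma \ref{lemma_talagrand} as stated), and note that the proof of Lemma \ref{lemma_generalized_talagrand} goes through verbatim when each $\FF_\alpha$ is given its own sequence $(a_n^\alpha)_{n\in\omega}$ of pairwise disjoint finite sets: the construction of the $w_\alpha$ and $I_\alpha$ and the final pseudointersection are unchanged. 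With that routine adjustment your proof is complete.
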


\section{Generalizing an old lemma}\label{section_old_lemma}

The following result is a ``multivariable'' generalization of \cite[Lemma 7]{kunen_medini_zdomskyy}; it is the combinatorial core of the proof of Theorem \ref{theorem_partial_answer}. Given $z\subseteq\omega$, we will use the notation $z\!\uparrow\,\,=\{x\subseteq\omega:z\subseteq x\}$. Observe that each $z\!\uparrow$ is a compact subspace of $2^\omega$.

\begin{lemma}\label{lemma_generalized_miller}
Let $\kappa<\pppp$ be a cardinal, and let $\FF_\alpha$ be non-meager filters for $\alpha\in\kappa$. Set $X=\prod_{\alpha\in\kappa}\FF_\alpha$, and denote the projections by $\pi_\alpha:X\longrightarrow\FF_\alpha$ for $\alpha\in\kappa$. Assume that $Q\subseteq X$ is countable and crowded, and that each $\pi_\alpha[Q]$ has a pseudointersection in $\FF_\alpha$. Then there exists a crowded $Q'\subseteq Q$ and $z_\alpha\in\FF_\alpha$ for $\alpha\in\kappa$ such that each $\pi_\alpha[Q']\subseteq z_\alpha\!\uparrow$.
\end{lemma}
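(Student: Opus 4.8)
Following the proof of \cite[Lemma 7]{kunen_medini_zdomskyy} (the case $\kappa=1$), the plan is to run the same kind of construction while keeping track of all $\kappa$ coordinates at once, the crucial new tool being Lemma \ref{lemma_generalized_talagrand}. First fix, for each $\alpha$, a pseudointersection $w_\alpha\in\FF_\alpha$ of $\pi_\alpha[Q]$, enumerated increasingly as $w_\alpha=\{w_\alpha(i):i\in\omega\}$, and for $q\in Q$ set $E_\alpha(q)=\{i\in\omega:w_\alpha(i)\notin\pi_\alpha(q)\}$, a finite set (so $\pi_\alpha(q)\cap w_\alpha=\{w_\alpha(i):i\notin E_\alpha(q)\}$). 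Passing to the trace filters $\tilde\FF_\alpha=\{A\subseteq\omega:\{w_\alpha(i):i\in A\}\in\FF_\alpha\}$, which are again non-meager since $w_\alpha\in\FF_\alpha$, it is enough to produce a crowded $Q'\subseteq Q$ together with sets $A_\alpha\in\tilde\FF_\alpha$ such that $A_\alpha\cap E_\alpha(q)=\varnothing$ for every $q\in Q'$ and $\alpha\in\kappa$; then $z_\alpha=\{w_\alpha(i):i\in A_\alpha\}$ is as required.

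First I would build a crowded $Q_0\subseteq Q$ by recursion on $k\in\omega$, maintaining a finite $L_k\subseteq Q$ (with $L_0$ a singleton and $L_{k-1}\subseteq L_k$), an increasing integer sequence $(b_k)$, and an increasing sequence of finite ``active'' sets $\sigma_k\subseteq\kappa$. At stage $k$, for each $p\in L_{k-1}$ I adjoin to $L_k$ a new point $p'\in Q$, distinct from all earlier ones, lying in the clopen neighbourhood of $p$ determined by agreement on the finitely many coordinates $\{(\alpha,w_\alpha(i)):\alpha\in\sigma_{k-1},\ i<b_{k-1}\}$ and in a small neighbourhood of $p$; crowdedness of $Q$ makes this possible, and it ensures that $Q_0=\bigcup_k L_k$ is crowded, since each $p\in L_{k-1}$ is a limit of the new points attached to it at later stages. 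I then pick $b_k>b_{k-1}$ large enough that $E_\alpha(p')\subseteq[0,b_k)$ for each new $p'$ and each $\alpha\in\sigma_{k-1}$, and, whenever a coordinate first enters $\sigma_k$, also large enough to ``absorb'' the finitely many $\alpha$-errors then in play. The built-in agreements force, for each active $\alpha$, that $E_\alpha(p)$ decomposes along the blocks $a_m:=[b_{m-1},b_m)$ (with $a_0:=[0,b_0)$): writing $I_\alpha(p):=\{m:E_\alpha(p)\cap a_m\neq\varnothing\}$, a finite set, one gets $E_\alpha(p)\subseteq\bigcup_{m\in I_\alpha(p)}a_m$, and if $p'$ is adjoined to $p$ at stage $k$ then $I_\alpha(p')\subseteq I_\alpha(p)\cup\{k\}$ as soon as $\alpha$ is active.

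Now I would apply Lemma \ref{lemma_generalized_talagrand}, which is legitimate since $\kappa<\pppp\le\tttt$, to the non-meager filters $\tilde\FF_\alpha$ and the pairwise disjoint finite sets $(a_m)_{m\in\omega}$, obtaining $A_\alpha\in\tilde\FF_\alpha$ together with a \emph{single} infinite $I\subseteq\omega$ such that $A_\alpha\cap a_m=\varnothing$ for every $\alpha$ and every $m\in I$. After a harmless finite shrinking of each $A_\alpha$ (to also avoid the initial block absorbing the errors of the points predating the activation of $\alpha$, at the stage $k_\alpha$ say), I would set $Q':=\{p\in Q_0:I_\alpha(p)\cap[k_\alpha,\omega)\subseteq I$ for every active $\alpha\}$. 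Then $A_\alpha\cap E_\alpha(p)=\varnothing$ for every $p\in Q'$ and $\alpha$, as desired; and $Q'$ is still crowded, because if $p\in Q'$ then for every sufficiently large $k\in I$ the point adjoined to $p$ at stage $k$ again satisfies the defining condition of $Q'$ (using $I_\alpha(p')\subseteq I_\alpha(p)\cup\{k\}$ and $k\in I$), and these points converge to $p$ as $k\to\infty$ through $I$, so no point of $Q'$ is isolated.

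The step I expect to be the main obstacle is making all of this work when $\kappa$ is uncountable: an $\omega$-stage recursion can only ``activate'' countably many coordinates, whereas $Q$ may genuinely vary on uncountably many of them, and for an unactivated $\alpha$ there is no reason in sight why $\bigcup_{q\in Q'}E_\alpha(q)$ should be avoidable by an element of $\tilde\FF_\alpha$. This is precisely where the hypothesis $\kappa<\pppp$ must be used in full strength, and not merely $\kappa\le\omega$; it seems to call for a more careful argument, for instance first exploiting $\kappa<\pppp\le\mathfrak{b}$ to dominate, by a single function of the enumeration index of $Q$, the ``widths'' of all the sets $E_\alpha(q)$ simultaneously, so that a single block structure on $\omega$ governs every coordinate at once and per-coordinate activation becomes unnecessary. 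Reconciling this domination with the convergence requirements defining $Q_0$ is the delicate heart of the proof.
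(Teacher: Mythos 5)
There is a genuine gap, and you have located it yourself: your construction is an $\omega$-stage recursion that can only ever ``activate'' countably many coordinates, so it proves the lemma only for $\kappa\leq\omega$. For an uncountable $\kappa<\pppp$ the problem is not merely that the error sets $E_\alpha(q)$ of unactivated coordinates cannot be controlled; it is already that $Q'$ has no reason to be crowded in the product topology, since a basic neighbourhood of a point $p\in Q'$ may constrain a coordinate $\alpha$ that your recursion never looked at, and nothing in the construction produces other points of $Q'$ agreeing with $p$ there. The $\mathfrak{b}$-domination idea you sketch does not address this: dominating the widths of the $E_\alpha(q)$ uniformly says nothing about securing agreement, on each of uncountably many coordinates, between each point of $Q'$ and infinitely many other points of $Q'$. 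Since you explicitly defer ``the delicate heart of the proof'' to this unresolved step, the proposal is incomplete precisely where the hypothesis $\kappa<\pppp$ has to do its work.

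The paper's resolution is to recast your recursion as a forcing argument: conditions are finite approximations $(n,F,\zeta,\vec{k})$ to a tree $\zeta$ of points of $Q$ indexed by a subtree $\Ss$ of $2^{<\omega}$, where $F\in[\kappa]^{<\omega}$ is exactly your finite set of active coordinates and $\vec{k}$ is your block sequence; the extension relation imposes the agreement and absorption requirements (your conditions on $L_k$, $\sigma_k$, $b_k$) only on the coordinates in $F$. This poset is $\sigma$-centered, and for each $\alpha\in\kappa$ the set of conditions with $\alpha\in F$ is dense, as is the set of conditions of length $\geq n$ for each $n$; since there are only $\kappa+\omega<\pppp$ of these dense sets, Bell's theorem \cite[Theorem III.3.61]{kunen} yields a filter meeting all of them, which is exactly the device that lets \emph{every} coordinate be activated even when $\kappa$ is uncountable (a bookkeeping function ensures that each node of the resulting tree is approached, in every finite set of coordinates and up to every $k_n$, by infinitely many later nodes, which is what gives crowdedness of $Q'$). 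The endgame is then essentially what you propose: apply Lemma \ref{lemma_generalized_talagrand} to the blocks $[k_n,k_{n+1})$ to get a single infinite set of ``good'' levels and elements $w_\alpha\in\FF_\alpha$ avoiding the corresponding blocks, and take $Q'$ to be the branch of the tree through those levels. So your reduction to Lemma \ref{lemma_generalized_talagrand} and your block-decomposition of the error sets are the right ideas, but the missing ingredient is the replacement of the recursion by a $\sigma$-centered poset together with Bell's theorem, which is where $\kappa<\pppp$ is actually used.
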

\begin{proof}
Start by choosing an arbitrary $r_\varnothing\in Q$. Fix $v_\alpha\in\FF_\alpha$ for $\alpha\in\kappa$ such that $v_\alpha\subseteq^\ast\pi_\alpha(r)$ for all $r\in Q$ and $\alpha\in\kappa$. Also assume that each $v_\alpha\subseteq\pi_\alpha(r_\varnothing)$. Define
$$
\Ss=\{\varnothing\}\cup\{s\in 2^{<\omega}:|s|\geq 1\text{ and
}s(|s|-1)=0\}.
$$
Also set $\Ss_{\leq n}=\{s\in\Ss :|s|\leq n\}$ and $\Ss_n=\{s\in\Ss :|s|=n\}$ for $n\in\omega$. We will need a bookkeeping function $\psi:\omega\longrightarrow\omega$ that satisfies the following requirements:
\begin{itemize}
\item[(1)] $\psi(i)<i$ for every $i\geq 1$, 
\item[(2)] $\psi^{-1}(j)$ is infinite for every $j\in\omega$.
\end{itemize}
Constructing such a function is an easy exercise, left to the reader.

Denote by $\PPP$ the set of all quadruples of the form $p=(n,F,\zeta,\vec{k})=(n^p,F^p,\zeta^p,\vec{k}^p)$ that satisfy the following requirements:
\begin{itemize}
\item[(3)] $n\in\omega$,
\item[(4)] $F\in [\kappa]^{<\omega}$,
\item[(5)] $\zeta:\Ss_{\leq n}\longrightarrow Q$ and $\zeta(\varnothing)=r_\varnothing$,
\item[(6)] $s\subsetneq s'$ implies $\zeta(s)\neq\zeta(s')$ for all $s,s'\in\Ss_{\leq n}$,
\item[(7)] $\vec{k}=(k_0,\ldots,k_n)\in\omega^{n+1}$, where $0=k_0<\cdots<k_n$.
\end{itemize}
We will also use the obvious notation $\vec{k}^p=(k^p_0,\ldots,k^p_n)$ for $p\in\PPP$, where $n=n^p$. Define $\ell_s\in\omega$ for every $s\in 2^{<\omega}$ so that $\{t\in\Ss:t\subsetneq s\}=\{t^s_i:i<\ell_s\}$, where $\varnothing=t^s_0\subsetneq\cdots\subsetneq t^s_{\ell_s-1}$ whenever $\ell_s\geq 1$. Notice that if $s,s'\in 2^{<\omega}$ and $s\subseteq s'$ then $t^s_i=t^{s'}_i$ for every $i<\ell_s$.

Order $\PPP$ by declaring $q\leq p$ if the following conditions are satisfied:
\begin{itemize}
\item[(8)] $n^q\geq n^p$,
\item[(9)] $F^q\supseteq F^p$,
\item[(10)] $\zeta^q\supseteq\zeta^p$,
\item[(11)] $\vec{k}^q$ is an end-extension of $\vec{k}^p$ (that is, $k^q_i=k^p_i$ for every $i\leq n^p$),
\item[(12)] $\zeta^q(s)\re\big(F^p\times k^q_{|s|-1}\big)=\zeta^q\big(t^s_{\psi(\ell_s)}\big)\re\big(F^p\times k^q_{|s|-1}\big)$ for every $s\in\Ss$ such that $n^p<|s|\leq n^q$,
\item[(13)] $v_\alpha\setminus k^q_n\subseteq\bigcap\big\{\pi_\alpha\big(\zeta^q(s)\big):s\in\Ss_{\leq n}\big\}$ whenever $\alpha\in F^p$ and $n^p<n\leq n^q$.
\end{itemize}

Given $\alpha\in\kappa$ and $n\in\omega$, make the following definitions:
\begin{itemize}
\item $D^\coord_\alpha=\{p\in\PPP:\alpha\in F^p\}$,
\item $D^\length_n=\{p\in\PPP:n^p\geq n\}$.
\end{itemize}

\noindent\textbf{Claim 1.} Each $D^\coord_\alpha$ is a dense subset of $\PPP$.

\noindent\textit{Proof.} This is trivial. $\blacksquare$

\noindent\textbf{Claim 2.} Each $D^\length_n$ is a dense subset of $\PPP$.

\noindent\textit{Proof.} Pick $n\in\omega$. Clearly, it will be enough to show that for every $p\in\PPP$ there exists $q\in\PPP$ such that $q\leq p$ and $n^q=n^p+1$. So pick $p=(n,F,\zeta,\vec{k})\in\PPP$. Begin to construct $q$ by setting $n^q=n+1$ and $F^q=F$. In order to satisfy condition $(10)$, we must set $\zeta^q(s)=\zeta(s)$ for $s\in\Ss_{\leq n}$. Given $s\in\Ss_{n+1}$, using condition $(1)$ and the assumption that $Q$ is crowded, we can define $\zeta^q(s)$ to be any element of
$$
\big\{r\in Q:r\re (F\times k_n)=\zeta\big(t^s_{\psi(\ell_s)}\big)\re (F\times k_n)\big\}\setminus\{\zeta(t^s_i):i<\ell_s\}.
$$
This way, conditions $(6)$ and $(12)$ will be satisfied. In order to satisfy $(11)$, we must set $k^q_i=k^p_i$ for $i\leq n$. To conclude the construction of $q$, simply choose $k^q_{n+1}>k^p_n$ big enough so that condition $(13)$ will be satisfied. $\blacksquare$

\noindent\textbf{Claim 3.} $\PPP$ is $\sigma\text{-}\mathrm{centered}$.

\noindent\textit{Proof.} Notice that there are only countably many triples $(n,\zeta,\vec{k})$ such that the requirements $(3)$, $(5)$, $(6)$ and $(7)$ are satisfied. Given such a triple, set
$$
\PPP(n,\zeta,\vec{k})=\{p\in\PPP:n^p=n\text{, }\zeta^p=\zeta\text{ and }\vec{k}^p=\vec{k}\}.
$$
Since each $\PPP(n,\zeta,\vec{k})$ is clearly centered, the desired conclusion follows. $\blacksquare$

Observe that the collection
$$
\DD=\{D^\coord_\alpha:\alpha\in\kappa\}\cup\{D^\length_n:n\in\omega\}
$$
consists of dense subsets of $\PPP$ by Claims 1 and 2, and that $|\DD|<\pppp$ because $\kappa<\pppp$. Therefore, since $\PPP$ is $\sigma$-centered by Claim 3, it is possible to apply Bell's Theorem\footnote{\,This result was essentially obtained in \cite{bell}; however, the modern statement given in \cite{kunen} is the one that is needed here.} (see \cite[Theorem III.3.61]{kunen}), which guarantees the existence of a filter $G$ on $\PPP$ that intersects every element of $\DD$.

Define
$$
\zeta=\bigcup\{\zeta^p:p\in G\},
$$
and observe that $\zeta:\Ss\longrightarrow Q$. Then set
$$
Q'=\{\zeta(\phi\re n):n\in\omega\text{ such that }\phi\re n\in\Ss\}\subseteq Q.
$$

Given $n\in\omega$, pick any $p\in G\cap D^\length_n$, then set $k_n=k^p_n$. Since each $\FF_\alpha$ is a non-meager filter, an application of Lemma \ref{lemma_generalized_talagrand} yields $w_\alpha\in\FF_\alpha$ for $\alpha\in\kappa$ and a function $\phi:\omega\longrightarrow 2$ such that $\phi^{-1}(0)$ is infinite and
$$
w_\alpha\cap\bigcup\{[k_n,k_{n+1}):n\in\phi^{-1}(0)\}=\varnothing.
$$
Fix $p_\alpha\in G\cap D^\coord_\alpha$ for $\alpha\in\kappa$. Then choose $n_\alpha\in\omega$ large enough for $\alpha\in\kappa$ so that each
$$
(v_\alpha\cap w_\alpha)\setminus n_\alpha\subseteq\bigcap\big\{\pi_\alpha\big(\zeta(s)\big):s\in\Ss_{\leq n^{p_\alpha}}\big\}.
$$
Finally, set $z_\alpha=(v_\alpha\cap w_\alpha)\setminus n_\alpha\in\FF_\alpha$ for $\alpha\in\kappa$. It is clear that the following two claims will conclude the proof.

\noindent\textbf{Claim 4.} $Q'$ is crowded.

\noindent\textit{Proof.} Pick $r\in Q'$. Let $F\in [\kappa]^{<\omega}$ and $\ell\in\omega$. We will find $r'\in Q'$ such that $r'\neq r$ and $r'\re (F\times\ell)=r\re (F\times\ell)$. Fix $n\geq 1$ such that $k_{n-1}\geq\ell$. Using Claims 1 and 2, it is possible to find $p\in G$ such that $n^p\geq n$ and $F^p\supseteq F$. Let $s\in\Ss$ be such that $s\subseteq\phi$ and $\zeta(s)=r$. Let $j\in\omega$ be the unique index such that $s=t^{s'}_j$ for every $s'\in 2^{<\omega}$ such that $s\subsetneq s'$. By condition $(2)$, it is possible to find $s'\in\Ss$ such that $s\subsetneq s'\subseteq\phi$, $|s'|>n^p$ and $\psi(\ell_{s'})= j$. Set $r'=\zeta(s')$, and observe that $r'\neq r$ by condition $(6)$. Finally, pick $q\in G$ such that $q\leq p$ and $n^q\geq |s'|$. It follows from condition $(12)$ that
$$
r'\re (F\times k_{n-1})=\zeta\big(t^{s'}_{\psi(\ell_{s'})}\big)\re (F\times k_{n-1}).
$$
Since $\zeta\big(t^{s'}_{\psi(\ell_{s'})}\big)=\zeta\big(t^{s'}_j\big)=\zeta(s)=r$, this shows that $r'$ is as desired. $\blacksquare$

\noindent\textbf{Claim 5.} $\pi_\alpha[Q']\subseteq z_\alpha\!\uparrow$ for every $\alpha\in\kappa$.

\noindent\textit{Proof.} Pick $\alpha\in\kappa$. We will use induction to show that $z_\alpha\subseteq\pi_\alpha\big(\zeta(\phi\re n)\big)$ for every $n\in\omega$ such that $\phi\re n\in\Ss$. The case $n=0$ is clear by our choice of $v_\alpha$. Now assume that $n\geq 1$ and $s=\phi\re n\in\Ss$. If $n\leq n^{p_\alpha}$ then $z_\alpha\subseteq\pi_\alpha\big(\zeta(s)\big)$ by our choice of $n_\alpha$, so assume that $n>n^{p_\alpha}$. Let $k\in z_\alpha$. We will show that $k\in\pi_\alpha\big(\zeta(s)\big)$ by considering the following cases:
\begin{itemize}
\item $k\in [0,k_{n-1})$,
\item $k\in [k_{n-1},k_n)$,
\item $k\in [k_n,\infty)$.
\end{itemize}
First assume that $k\in [0,k_{n-1})$. Pick $q\in G$ such that $q\leq p_\alpha$ and $n^q\geq n$. Since we have assumed that $n>n^{p_\alpha}$, condition $(12)$ guarantees that
$$
\zeta(s)\re (\{\alpha\}\times k_{n-1})=\zeta(s')\re (\{\alpha\}\times k_{n-1})
$$
for some $s'\in\Ss$ such that $s'\subsetneq s$. Therefore, the desired conclusion follows from the inductive hypothesis. The case $k\in [k_{n-1},k_n)$ is impossible, since $\phi(n-1)=s(n-1)=0$ implies that $[k_{n-1},k_n)\cap z_\alpha=\varnothing$ by our choice of $w_\alpha$. To conclude the proof, assume that $k\in [k_n,\infty)$. By picking $q\in G$ as above and using the assumption that $n>n^{p_\alpha}$, one sees that $v_\alpha\setminus k_n\subseteq\pi_\alpha\big(\zeta(s)\big)$ by condition $(13)$. Since $z_\alpha\subseteq v_\alpha$, it follows that $k\in\pi_\alpha\big(\zeta(s)\big)$. $\blacksquare$
\end{proof}

It seems worthwhile to state the following corollary\footnote{\,Who says that only \emph{theorems} should be allowed to have corollaries?} explicitly; however, in the proof of Theorem \ref{theorem_partial_answer}, we will need the stronger result given by Lemma \ref{lemma_generalized_miller}.

\begin{corollary}\label{corollary_generalized_miller}
Let $\kappa<\pppp$ be a cardinal, and let $\FF_\alpha$ be non-meager $\mathsf{P}$-filters for $\alpha\in\kappa$. Then $\prod_{\alpha\in\kappa}\FF_\alpha$ has the $\MP$.
\end{corollary}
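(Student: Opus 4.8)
The plan is to obtain this as an essentially immediate consequence of Lemma \ref{lemma_generalized_miller}. Write $X=\prod_{\alpha\in\kappa}\FF_\alpha$, with projections $\pi_\alpha:X\longrightarrow\FF_\alpha$, and recall that to verify that $X$ has the $\MP$ it is enough to take an arbitrary countable crowded $Q\subseteq X$ and produce a crowded $Q'\subseteq Q$ whose closure in $X$ is compact.

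First I would check that the hypotheses of Lemma \ref{lemma_generalized_miller} are satisfied by this $Q$. The assumptions $\kappa<\pppp$ and that each $\FF_\alpha$ is non-meager are given. Since $Q$ is countable, each $\pi_\alpha[Q]$ is a countable subset of $\FF_\alpha$; as $\FF_\alpha$ is a $\mathsf{P}$-filter, $\pi_\alpha[Q]$ therefore has a pseudointersection in $\FF_\alpha$. Hence Lemma \ref{lemma_generalized_miller} applies and yields a crowded $Q'\subseteq Q$ together with elements $z_\alpha\in\FF_\alpha$ for $\alpha\in\kappa$ such that $\pi_\alpha[Q']\subseteq z_\alpha\!\uparrow$ for every $\alpha$.

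It then remains to argue that the closure of $Q'$ in $X$ is compact. Since each $z_\alpha$ belongs to the upward-closed set $\FF_\alpha$, we have $z_\alpha\!\uparrow\subseteq\FF_\alpha$, so $K=\prod_{\alpha\in\kappa}(z_\alpha\!\uparrow)$ is a subspace of $X$; moreover $K$ is compact, being a product of the compact subspaces $z_\alpha\!\uparrow$ of $2^\omega$. As $Q'\subseteq K$ and $K$ is closed in the Hausdorff space $X$, the closure of $Q'$ in $X$ is contained in $K$, hence compact, as required.

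I do not expect a genuine obstacle here: the combinatorial work is already carried out in Lemma \ref{lemma_generalized_miller}, and the only points that need care are confirming that the $\mathsf{P}$-filter hypothesis is exactly what supplies the pseudointersections of the $\pi_\alpha[Q]$ demanded by the lemma, and that compactness transfers from the product $K=\prod_{\alpha}(z_\alpha\!\uparrow)$ to the closure of $Q'$.
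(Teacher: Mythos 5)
Your proof is correct and is exactly the intended derivation: the paper treats this corollary as an immediate consequence of Lemma \ref{lemma_generalized_miller}, with the $\mathsf{P}$-filter hypothesis supplying the pseudointersections of the sets $\pi_\alpha[Q]$ and compactness of $\prod_{\alpha\in\kappa}(z_\alpha\!\uparrow)\subseteq X$ finishing the argument. No gaps.
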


The above corollary suggests the following question (see also Theorem \ref{theorem_product_miller}).

\begin{question}\label{question_product_miller}
Let $\kappa<\pppp$ be a cardinal, and let $X_\alpha$ be separable metrizable spaces with the $\MP$ for $\alpha\in\kappa$. Does it follow that $\prod_{\alpha\in\kappa}X_\alpha$ has the $\MP$?
\end{question}

We remark that, using Corollary \ref{corollary_generalized_miller}, it is possible to show that the intersection of fewer that $\pppp$ non-meager $\mathsf{P}$-filters is a non-meager $\mathsf{P}$-filter. We will omit the details however, since Corollary \ref{corollary_intersection_filters} gives a better result (better because $\pppp\leq\add(m^0)$ by \cite[Theorem 1.1]{goldstern_repicky_shelah_spinas}).

\section{A partial answer to Hru\v{s}\'{a}k's question}\label{section_partial_answer}

In this section, we will combine the results that we have discussed so far to finally prove one half of Conjecture \ref{conjecture_complete_answer}, as promised.

\begin{theorem}\label{theorem_partial_answer}
Let $\kappa$ be a cardinal and let $\FF_\alpha$ be filters for $\alpha\in\kappa$. If $\prod_{\alpha\in\kappa}\FF_\alpha$ is $\CDH$ then $\kappa<\pppp$ and each $\FF_\alpha$ is a non-meager $\mathsf{P}$-filter.
\end{theorem}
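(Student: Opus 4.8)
The plan is to prove the two conclusions separately, using the results assembled in the earlier sections. Write $X=\prod_{\alpha\in\kappa}\FF_\alpha$ and assume $X$ is $\CDH$. First I would observe that each individual factor $\FF_\alpha$ must be a non-meager $\mathsf{P}$-filter. The slick way to see this is to reduce to the countable case: for any fixed $\alpha$, the space $X$ is homeomorphic to $\FF_\alpha\times Y$ where $Y$ is the product of the remaining factors (and of $\FF_\alpha$ again finitely or countably often if one wishes to invoke Proposition \ref{proposition_generalized_shelah}). Since $\CDH$ is not in general inherited by factors, the cleaner route is to use Theorem \ref{theorem_dobrowolski_krupski_marciszewski} to conclude that $X$ is a Baire space, hence every $\FF_\alpha$ is non-meager (a non-Baire factor would force $X$ to be meager, as a product with a meager factor is meager); and then to argue that each $\FF_\alpha$ is a $\mathsf{P}$-filter. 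For the $\mathsf{P}$-filter part I would use that $\FF_\alpha$ retracts onto a copy of itself inside $X$ via a slice, or better, cite Theorem \ref{theorem_kunen_medini_zdomskyy}: once we know $\FF_\alpha$ is non-meager, it suffices to show it has the $\MP$, which will follow from the work done for the product.

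The real content is the bound $\kappa<\pppp$. Here the strategy is contrapositive: assume $\kappa\geq\pppp$ and derive that $X$ is not $\CDH$. By \cite[Theorem 8.1]{medini_steprans} (quoted in the introduction) no product of at least $\pppp$ non-trivial separable metrizable spaces is $\CDH$, and each $\FF_\alpha$ is a non-trivial separable metrizable space, so this is immediate. However, since the paper evidently wants a more self-contained argument leveraging the new Lemma \ref{lemma_generalized_miller}, the alternative plan is: suppose toward a contradiction that $\kappa<\pppp$ fails but $X$ is $\CDH$. By the first part each $\FF_\alpha$ is a non-meager $\mathsf{P}$-filter, so $\kappa\geq\pppp$ exactly. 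Fix a witnessing tower / unbounded family of size $\pppp$ and build inside $X$ a countable crowded set $Q$ whose projections $\pi_\alpha[Q]$, over the first $\pppp$ coordinates, cannot all be simultaneously trapped below filter sets $z_\alpha$ — the obstruction coming from the fact that a pseudointersection of $\pppp$-many sets need not exist. One then shows this prevents the $\MP$, and since $X$ is $\CDH$ it must have the $\MP$ (this is where Theorem \ref{theorem_kunen_medini_zdomskyy} and Lemma \ref{lemma_crowded}, together with a compactification argument embedding $X$ densely in a compact metrizable $K$, come in), a contradiction.

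In more detail for this second route: embed $X$ as a dense subspace of a compact metrizable $K$ (e.g. $K=(2^\omega)^\kappa$ restricted appropriately, or the closure of $X$ in $2^{\omega\times\kappa}$). Using $\CDH$ plus Theorem \ref{theorem_dobrowolski_krupski_marciszewski} one knows $X$ is Baire; then an argument in the spirit of Lemma \ref{lemma_crowded} produces, from a closed copy of $\omega^\omega$ in the complement $K\setminus X$, a countable crowded $Q\subseteq X$ that must admit a crowded $Q'\subseteq Q$ with compact closure in $X$. Lemma \ref{lemma_generalized_miller} is the tool that, when $\kappa<\pppp$, lets us pass from "$Q'$ crowded" to control of the projections; conversely, when $\kappa\geq\pppp$, one arranges $Q$ so that the hypothesis of Lemma \ref{lemma_generalized_miller} (some $\pi_\alpha[Q]$ fails to have a pseudointersection in $\FF_\alpha$) holds, and then shows directly that no crowded $Q'\subseteq Q$ has compact closure in $X$, contradicting the $\MP$.

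The main obstacle I anticipate is the $\kappa<\pppp$ direction if one insists on a hands-on proof rather than citing \cite[Theorem 8.1]{medini_steprans}: one must carefully engineer the countable crowded set $Q$ in the product so that the failure of a common pseudointersection across $\geq\pppp$ coordinates genuinely obstructs compactness of the closure of every crowded subset, and this requires tracking how the product topology and the filter structure interact on closures in the compactification. The cleanest and most honest plan is therefore to cite \cite[Theorem 8.1]{medini_steprans} for "$\kappa<\pppp$" and to spend the detailed work on showing each $\FF_\alpha$ is a non-meager $\mathsf{P}$-filter, using Theorem \ref{theorem_dobrowolski_krupski_marciszewski} for non-meagerness and a slicing/retraction argument plus Theorem \ref{theorem_kunen_medini_zdomskyy} for the $\mathsf{P}$-filter property.
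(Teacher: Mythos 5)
Your outline agrees with the paper on the easy parts: the bound $\kappa<\pppp$ is indeed obtained by citing \cite[Theorem 8.1]{medini_steprans} (after noting $\kappa\leq\cccc$ from separability), and non-meagerness of each $\FF_\alpha$ does follow from Theorem \ref{theorem_dobrowolski_krupski_marciszewski} via openness of the projections. But there is a genuine gap at the heart of the argument: you assert that the $\mathsf{P}$-filter property of each $\FF_\alpha$ ``will follow from the work done for the product,'' i.e.\ from showing that $X=\prod_{\alpha\in\kappa}\FF_\alpha$ has the $\MP$, yet you never actually show that $X$ has the $\MP$. The tool you point to, Lemma \ref{lemma_generalized_miller}, cannot be applied to an arbitrary countable crowded $Q\subseteq X$, because its hypothesis requires each $\pi_\alpha[Q]$ to have a pseudointersection in $\FF_\alpha$ --- and that is essentially the $\mathsf{P}$-filter property you are trying to prove, so it is not available at this stage. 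Your sketches (a contrapositive construction for $\kappa\geq\pppp$, an appeal to Lemma \ref{lemma_crowded} in a compactification) circle around this point without resolving it.

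The missing idea is that the $\CDH$ hypothesis itself supplies the pseudointersections: given a countable crowded $Q\subseteq X$, extend it to a countable dense $D\supseteq Q$, fix a countable dense $E\subseteq\Cof^\kappa$, and use a homeomorphism $h:X\longrightarrow X$ with $h[D]=E$ to replace $Q$ by $R=h[Q]\subseteq\Cof^\kappa$. Every subset of $\Cof$ has $\omega$ as a pseudointersection, so Lemma \ref{lemma_generalized_miller} applies to $R$ (its other hypotheses --- $\kappa<\pppp$ and non-meagerness of each $\FF_\alpha$ --- having already been secured), producing a crowded $R'\subseteq R$ with compact closure, which is then pulled back by $h^{-1}$. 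This is precisely why the paper proves the stronger Lemma \ref{lemma_generalized_miller} rather than just Corollary \ref{corollary_generalized_miller}. One further small omission: Theorem \ref{theorem_dobrowolski_krupski_marciszewski} requires $\kappa$ infinite, so the finite case needs the separate (easy) treatment via Proposition \ref{proposition_generalized_shelah} and Theorem \ref{theorem_kunen_medini_zdomskyy}.
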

\begin{proof}
Set $X=\prod_{\alpha\in\kappa}\FF_\alpha$, and denote the projections by $\pi_\alpha:X\longrightarrow\FF_\alpha$ for $\alpha\in\kappa$. Assume that $X$ is $\CDH$. By \cite[Exercise 2.3.F.(c)]{engelking}, we must have $\kappa\leq\cccc$, otherwise $X$ would not be separable. It now follows from \cite[Theorem 8.1]{medini_steprans} that $\kappa<\pppp$.

It remains to show that each $\FF_\alpha$ is a non-meager $\mathsf{P}$-filter. By Theorem \ref{theorem_kunen_medini_zdomskyy}, it will suffice to show that each $\FF_\alpha$ has the $\MP$. In fact, since $X$ contains closed copies of each $\FF_\alpha$, it will be enough to show that $X$ has the $\MP$. First assume that $\kappa$ is finite. In this case, since $X$ is homeomorphic to a filter by Proposition \ref{proposition_generalized_shelah}, the desired conclusion follows from Theorem \ref{theorem_kunen_medini_zdomskyy}.

Now assume that $\kappa$ is infinite. In this case, it follows from Theorem \ref{theorem_dobrowolski_krupski_marciszewski} that $X$ is a Baire space, hence non-meager. Since each $\pi_\alpha$ is open, it follows that each $\FF_\alpha$ is non-meager. To see that $X$ has the $\MP$, pick a countable crowded $Q\subseteq X$. Fix countable dense subsets $D$ and $E$ of $X$ such that $D\supseteq Q$ and $E\subseteq\Cof^\kappa$. Since $X$ is $\CDH$, there exists a homeomorphism $h:X\longrightarrow X$ such that $h[D]=E$. Set $R=h[Q]\subseteq\Cof^\kappa$. Since every subset of $\Cof$ has $\omega$ as a pseudointersection, we can apply Lemma \ref{lemma_generalized_miller}, which yields a crowded $R'\subseteq R$ whose closure in $X$ is compact. To conclude the proof, set $Q'=h^{-1}[R']$, and observe that $Q'\subseteq Q$. Since $h$ is a homeomorphism, it is clear that $Q'$ is crowded and its closure in $X$ is compact.
\end{proof}

\section{Miller-measurability versus the Miller property}\label{section_measurability}

Given a separable metrizable space $Z$, we will say that $X\subseteq Z$ is \emph{Miller-null} if for every closed copy $N$ of $\omega^\omega$ in $Z$ there exists a closed copy $N'\subseteq N$ of $\omega^\omega$ in $Z$ such that $N'\cap X=\varnothing$; we will say that $X$ is \emph{Miller-full} if $Z\setminus X$ is Miller-null.\footnote{\,Of course, these notions are only of interest when $Z$ contains closed copies of $\omega^\omega$.} Using Theorem \ref{theorem_characterization_baire_space} plus standard arguments involving trees, one sees that the notions defined above are equivalent to the classical ones (see for example \cite{goldstern_repicky_shelah_spinas}) when $Z=\omega^\omega$. Recall that
$$
m^0=\{X\subseteq	\omega^\omega:X\text{ is Miller-null}\}
$$
is the \emph{Miller ideal} and that
$$
\add(m^0)=\mathsf{min}\left\{|\XX|:\XX\subseteq m^0\text{ and }\bigcup\XX\notin m^0\right\}
$$
denotes the \emph{additivity} of $m^0$. It is well-known that $\omega_1\leq\add(m^0)\leq\cccc$, where the first inequality follows from a standard fusion argument, and the second inequality is trivial. In fact, as we already mentioned at the end of Section \ref{section_old_lemma}, the inequality $\pppp\leq\add(m^0)$ holds.

Since the definition of the $\MP$ was inspired by an argument from \cite{miller}, it is not surprising that it is strongly connected to the above notions of Miller-measurability. This connection was already exploited in \cite{medini_zdomskyy}, but the following result describes it explicitly. Proposition \ref{proposition_characterization_miller} will be applied several times in the next two sections.

\begin{proposition}\label{proposition_characterization_miller}
Let $X$ be a separable metrizable space. Then the following conditions are equivalent:
\begin{itemize}
\item[(1)] $X$ has the $\MP$,
\item[(2)] For every metrizable compactification $\gamma X$ of $X$ and for every countable dense subset $D$ of $X$, the set $X\setminus D$ is Miller-full in $\gamma X\setminus D$,
\item[(3)] There exists a metrizable compactification $\gamma X$ of $X$ such that for every countable dense subset $D$ of $X$, the set $X\setminus D$ is Miller-full in $\gamma X\setminus D$.
\end{itemize}
\end{proposition}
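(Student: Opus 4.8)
The plan is to establish the cycle $(1)\Rightarrow(2)\Rightarrow(3)\Rightarrow(1)$; the implication $(2)\Rightarrow(3)$ is immediate, since every separable metrizable space has a metrizable compactification (embed it in the Hilbert cube and take the closure). First I would unwind the definitions: for a fixed metrizable compactification $\gamma X$ and a countable dense $D\subseteq X$, the clause ``$X\setminus D$ is Miller-full in $\gamma X\setminus D$'' says precisely that $\gamma X\setminus X$ is Miller-null in $\gamma X\setminus D$, i.e.\ that every closed copy $N$ of $\omega^\omega$ in $\gamma X\setminus D$ contains a copy $N'\subseteq N$ of $\omega^\omega$ that is closed in $\gamma X\setminus D$ and misses $\gamma X\setminus X$; as $N'\subseteq N\subseteq\gamma X\setminus D$, this last condition just means $N'\subseteq N\cap X$ (and $N'$ is then closed in $N$). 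The elementary fact that glues everything together is: if $N$ is closed in $\gamma X\setminus D$ then $\cl_{\gamma X}(N)\setminus N=\cl_{\gamma X}(N)\cap D\subseteq D\subseteq X$, so that $\cl_{\gamma X}(N)$ is compact, $\cl_{\gamma X}(N)\setminus N$ is countable, and if moreover $N\subseteq X$ then $\cl_{\gamma X}(N)\subseteq X$.

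For $(1)\Rightarrow(2)$, assume $X$ has the $\MP$ and fix $\gamma X$, $D$, and a closed copy $N$ of $\omega^\omega$ in $\gamma X\setminus D$; I want a copy of $\omega^\omega$ closed in $N$ and contained in $N\cap X$. Since $D$ is dense in $\gamma X$, Lemma \ref{lemma_crowded} shows that $Q:=\cl_{\gamma X}(N)\cap D$ is crowded, and it is a countable subset of $D\subseteq X$, so the $\MP$ yields a crowded $Q'\subseteq Q$ with $\cl_X(Q')$ compact. Put $K'=\cl_{\gamma X}(Q')=\cl_X(Q')\subseteq X$; it is compact and, as $Q'$ is dense and crowded in it, crowded. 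Since $Q'\subseteq Q\subseteq\cl_{\gamma X}(N)$ we get $K'\subseteq\cl_{\gamma X}(N)$, whence $K'\setminus N\subseteq\cl_{\gamma X}(N)\setminus N=Q$ is countable; therefore $K'\cap N$ is co-countable, hence a dense $G_\delta$, in the compact crowded space $K'$ — in particular Polish and (by the standard Baire-category argument, using that $Q'\subseteq K'\setminus N$ is dense) not $\sigma$-compact — while being closed in $N$. Theorem \ref{theorem_kechris_saint_raymond}, applied in $N\cong\omega^\omega$ to the analytic (indeed closed-in-$N$) set $K'\cap N$, then yields a copy of $\omega^\omega$ that is closed in $N$ and contained in $K'\cap N\subseteq X$, as required.

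For $(3)\Rightarrow(1)$, fix a compactification $\gamma X$ as in $(3)$ and a countable crowded $Q\subseteq X$; I want a crowded $Q'\subseteq Q$ with $\cl_X(Q')$ compact. Let $K=\cl_{\gamma X}(Q)$, compact metrizable and, since $Q$ is dense and crowded in it, crowded. I would choose a countable dense $D\subseteq X$ with $D\cap\cl_X(Q)=Q$ — adjoin to $Q$ a countable dense subset of the open set $X\setminus\cl_X(Q)$ — so that $D\cap K=Q$, and hence $K\setminus D=K\setminus Q$ is a dense $G_\delta$ in $K$: Polish and, by the standard Baire-category argument using the density of $Q$, not $\sigma$-compact. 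Theorem \ref{theorem_kechris_saint_raymond}, applied to the Polish space $K\setminus D$ and to $K\setminus D$ itself, produces a closed copy $N$ of $\omega^\omega$ in $K\setminus D$; since $K$ is closed in $\gamma X$, $N$ is also closed in $\gamma X\setminus D$. The Miller-fullness hypothesis then gives a copy $N'\subseteq N$ of $\omega^\omega$ closed in $\gamma X\setminus D$ with $N'\subseteq X$, so that $\cl_{\gamma X}(N')\subseteq X$ is compact by the elementary fact. Finally, Lemma \ref{lemma_crowded} applied inside $K$ with dense set $D\cap K=Q$ shows that $Q':=\cl_{\gamma X}(N')\cap Q$ is crowded; it is contained in $Q$, and since $\cl_{\gamma X}(Q')\subseteq\cl_{\gamma X}(N')\subseteq X$ we have $\cl_X(Q')=\cl_{\gamma X}(Q')$, which is compact, finishing the proof.

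I expect the real work to be of two kinds. First, bookkeeping: one must keep careful track of closures taken in $\gamma X$, in its various compact subspaces, and in $X$, and check in each use of Lemma \ref{lemma_crowded} that the copy of $\omega^\omega$ at hand is genuinely closed in the appropriate remainder — in particular, in $(3)\Rightarrow(1)$ the dense set $D$ must be chosen so that the crowded set extracted via Lemma \ref{lemma_crowded} lies inside the prescribed $Q$ and not merely inside $D$. Second, and more substantively, the heart of $(1)\Rightarrow(2)$ is recognizing that $K'\setminus N$ is forced to be countable, which is exactly what licenses the application of Theorem \ref{theorem_kechris_saint_raymond} \emph{inside} $N$; that theorem (the Hurewicz dichotomy) is the sole non-elementary ingredient, and it is what makes both nontrivial implications go through.
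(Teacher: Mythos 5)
Your proof is correct, and its overall architecture --- the cycle $(1)\Rightarrow(2)\Rightarrow(3)\Rightarrow(1)$, the translation between closed copies of $\omega^\omega$ in $\gamma X\setminus D$ and compact subsets of $X$ whose trace on $D$ is crowded, and the two applications of Lemma \ref{lemma_crowded} --- is the same as the paper's. The differences are localized and concern which tool certifies the copies of $\omega^\omega$. In $(1)\Rightarrow(2)$ the paper observes that the set you call $K'\cap N$ (which in fact equals $\cl(Q')\setminus D$, where $\cl$ denotes closure in $\gamma X$) is \emph{itself} a closed copy of $\omega^\omega$, by the Alexandrov--Urysohn characterization (Theorem \ref{theorem_characterization_baire_space}); you instead extract a closed copy from it via the Hurewicz dichotomy (Theorem \ref{theorem_kechris_saint_raymond}), which is equally valid but invokes a heavier result than necessary. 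In $(3)\Rightarrow(1)$ the paper takes an arbitrary countable dense $D\supseteq Q$ and uses \cite[Exercise 7.4.17]{engelking} to find a zero-dimensional Polish $Z\subseteq\gamma X$ with $Z\cap D=Q$, then applies the $\MP$ of Polish spaces to produce a compact $K\subseteq Z$ with $K\cap Q$ crowded and dense in $K$; you instead take $K=\cl(Q)$ and tailor $D$ so that $D\cap K=Q$, then invoke Kechris--Saint Raymond inside the non-$\sigma$-compact $K\setminus Q$ to find $N$. Your route avoids the Engelking exercise and the fact that Polish spaces have the $\MP$, at the price of two extra Baire-category/Hurewicz arguments; both are sound, and your construction of $D$ correctly handles the one genuinely delicate point, namely that the crowded set produced at the end must land inside the prescribed $Q$ rather than merely inside $D$.
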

\begin{proof}
In order to prove the implication $(1)\rightarrow (2)$, assume that condition $(1)$ holds. Fix a metrizable compactification $\gamma X$ of $X$; we will use $\cl$ to denote closure in $\gamma X$. Pick a countable dense $D\subseteq X$, and a closed copy $N$ of $\omega^\omega$ in $\gamma X\setminus D$. Set $Q=\cl(N)\cap D$, and observe that $Q$ is crowded by Lemma \ref{lemma_crowded}. Therefore, by the $\MP$, there exists a crowded $Q'\subseteq Q$ such that $\cl(Q')\subseteq X$. Set $N'=\cl(Q')\setminus D\subseteq X\setminus D$. Using Theorem \ref{theorem_characterization_baire_space}, one sees that $N'\subseteq N$ is a closed copy of $\omega^\omega$ in $\gamma X\setminus D$, as desired.

The implication $(2)\rightarrow (3)$ is trivial. In order to prove the implication $(3)\rightarrow (1)$, assume that condition $(3)$ holds. Pick a countable crowded $Q\subseteq X$. Let $D\supseteq Q$ be a countable dense subset of $X$. Using \cite[Exercise 7.4.17]{engelking}, it is possible to find a zero-dimensional Polish subspace $Z$ of $\gamma X$ such that $Z\cap D=Q$. Since Polish spaces have the $\MP$ by \cite[Proposition 2.3]{medini_zdomskyy}, there exists a compact subspace $K$ of $Z$ such that $K\cap Q$ is crowded and dense in $K$; we will use $\cl$ to denotes closure in $K$. Set $N=K\setminus Q$, and observe that $N$ is homeomorphic to $\omega^\omega$ by Theorem \ref{theorem_characterization_baire_space}. Furthermore, the fact that $N=(\gamma X\setminus D)\cap K$ shows that $N$ is closed in $\gamma X\setminus D$. It follows from condition $(3)$ that there exists a closed copy $N'\subseteq N$ of $\omega^\omega$ in $\gamma X\setminus D$ such that $N'\subseteq X\setminus D$. Set $Q'=\cl(N')\cap Q$, and observe that $Q'$ is crowded by Lemma \ref{lemma_crowded}. On the other hand $\cl(Q')\subseteq\cl(N')\subseteq N'\cup D\subseteq X$, which shows that $Q'$ has compact closure in $X$, as desired.
\end{proof}

We conclude this section by pointing out that the above characterization has some interesting consequences concerning non-meager $\mathsf{P}$-filters.

\begin{corollary}\label{corollary_miller_filters}
Let $\FF$ be a filter. Then the following conditions are equivalent:
\begin{itemize}
\item $\FF$ is a non-meager $\mathsf{P}$-filter,
\item $\FF\setminus D$ is Miller-full in $2^\omega\setminus D$ for every countable dense subset $D$ of $\FF$.
\end{itemize}
\end{corollary}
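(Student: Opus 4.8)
The plan is to read this corollary off from Theorem \ref{theorem_kunen_medini_zdomskyy} together with Proposition \ref{proposition_characterization_miller}, taking $2^\omega$ as the ambient metrizable compactification. The only preliminary point to settle is that $2^\omega$ genuinely is a metrizable compactification of $\FF$: since $\omega\in\FF$ and $\FF$ is closed under $=^\ast$, we have $\Cof\subseteq\FF$, so every basic clopen subset of $2^\omega$ meets $\FF$, and hence $\FF$ is dense in $2^\omega$ (this is the observation already recorded in Section \ref{section_preliminaries_filters}).

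Granting this, I would proceed as follows. By Theorem \ref{theorem_kunen_medini_zdomskyy}, the condition that $\FF$ be a non-meager $\mathsf{P}$-filter is equivalent to $\FF$ having the $\MP$, so it suffices to show that $\FF$ has the $\MP$ if and only if $\FF\setminus D$ is Miller-full in $2^\omega\setminus D$ for every countable dense $D\subseteq\FF$. For the forward direction, I would apply the implication $(1)\rightarrow(2)$ of Proposition \ref{proposition_characterization_miller} with $X=\FF$ and then specialize to the compactification $\gamma X=2^\omega$; the countable dense subsets $D$ of $X=\FF$ quantified over there are exactly those in the statement of the corollary, and $X\setminus D$ and $\gamma X\setminus D$ unwind to $\FF\setminus D$ and $2^\omega\setminus D$. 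For the converse, I would observe that the hypothesis is precisely condition $(3)$ of Proposition \ref{proposition_characterization_miller} with $\gamma X=2^\omega$ serving as the witnessing compactification, and then invoke the implication $(3)\rightarrow(1)$.

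I do not anticipate any real obstacle here: all of the substance is already contained in Theorem \ref{theorem_kunen_medini_zdomskyy} and Proposition \ref{proposition_characterization_miller}, and the corollary is essentially a matter of substituting $\gamma X=2^\omega$ into the latter and matching up notation. The only points worth double-checking are the density of $\FF$ in $2^\omega$ (handled above) and the harmless fact that the Miller-fullness assertion is not vacuous, which holds because $2^\omega\setminus D$ is non-empty, zero-dimensional, Polish, and has no non-empty compact open subspace, hence is itself homeomorphic to $\omega^\omega$ by Theorem \ref{theorem_characterization_baire_space}.
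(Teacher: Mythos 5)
Your proposal is correct and matches the paper's proof, which is exactly the one-line combination of Theorem \ref{theorem_kunen_medini_zdomskyy} and Proposition \ref{proposition_characterization_miller} with $\gamma X=2^\omega$; your extra checks (density of $\FF$ in $2^\omega$ and non-vacuousness of Miller-fullness) are sound and harmless.
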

\begin{proof}
Simply combine Theorem \ref{theorem_kunen_medini_zdomskyy} and Proposition \ref{proposition_characterization_miller}.
\end{proof}

\begin{corollary}\label{corollary_intersection_filters}
Let $\kappa<\add(m^0)$ be a non-zero cardinal, and let $\FF_\alpha$ be non-meager $\mathsf{P}$-filters for $\alpha\in\kappa$. Then $\bigcap_{\alpha\in\kappa}\FF_\alpha$ is a non-meager $\mathsf{P}$-filter.
\end{corollary}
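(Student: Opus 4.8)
The plan is to reduce everything to Corollary \ref{corollary_miller_filters}. First I would note that $\FF:=\bigcap_{\alpha\in\kappa}\FF_\alpha$ is a filter: each of the four filter axioms is clearly preserved under arbitrary intersections (for the $=^\ast$-axiom and upward closure, if $x\in\FF$ then $x\in\FF_\alpha$ for all $\alpha$, so the enlargement lies in each $\FF_\alpha$; closure under finite intersections is equally immediate). So by Corollary \ref{corollary_miller_filters} it suffices to show that $\FF\setminus D$ is Miller-full in $2^\omega\setminus D$ for an arbitrary countable dense $D\subseteq\FF$. Fix such a $D$. Since every filter is dense in $2^\omega$, the set $D$ is actually dense in $2^\omega$, hence dense in each $\FF_\alpha$; moreover $D\subseteq\FF\subseteq\FF_\alpha$. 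Because $D\subseteq\FF$ we have $(2^\omega\setminus D)\setminus(\FF\setminus D)=2^\omega\setminus\FF$, and likewise $(2^\omega\setminus D)\setminus(\FF_\alpha\setminus D)=2^\omega\setminus\FF_\alpha$ for every $\alpha$. Thus the goal is exactly to show that $2^\omega\setminus\FF=\bigcup_{\alpha\in\kappa}(2^\omega\setminus\FF_\alpha)$ is Miller-null in $2^\omega\setminus D$, whereas Corollary \ref{corollary_miller_filters} applied to each $\FF_\alpha$ (legitimate, since $D$ is a countable dense subset of $\FF_\alpha$) tells us that each $2^\omega\setminus\FF_\alpha$ is Miller-null in $2^\omega\setminus D$.

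The next step is to identify the ambient space $2^\omega\setminus D$. It is zero-dimensional and Polish, being a $G_\delta$ subset of $2^\omega$ (as $D$ is countable, hence $F_\sigma$). It has no non-empty compact open subspace: any non-empty clopen piece of it has the form $V\setminus D$ with $V$ clopen in $2^\omega$, and since $2^\omega$ is crowded and $D$ countable we get $\cl_{2^\omega}(V\setminus D)=V\not\subseteq 2^\omega\setminus D$, so $V\setminus D$ is not closed in $2^\omega$ and a fortiori not compact; the same argument rules out any non-empty open subspace being compact. Hence $2^\omega\setminus D$ is homeomorphic to $\omega^\omega$ by Theorem \ref{theorem_characterization_baire_space}. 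As a homeomorphism carries closed copies of $\omega^\omega$ to closed copies of $\omega^\omega$, it carries the ideal of Miller-null subsets of $2^\omega\setminus D$ isomorphically onto the ideal of Miller-null subsets of $\omega^\omega$, which is $m^0$ by the remark following the definition of Miller-null sets. In particular, the additivity of the ideal of Miller-null subsets of $2^\omega\setminus D$ equals $\add(m^0)$.

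Finally, since $\kappa<\add(m^0)$ and $2^\omega\setminus\FF$ is the union of the family $\{2^\omega\setminus\FF_\alpha:\alpha\in\kappa\}$, which has cardinality at most $\kappa$ and consists of Miller-null subsets of $2^\omega\setminus D$, the set $2^\omega\setminus\FF$ is itself Miller-null in $2^\omega\setminus D$; equivalently, $\FF\setminus D$ is Miller-full in $2^\omega\setminus D$. Since $D$ was arbitrary, Corollary \ref{corollary_miller_filters} yields that $\FF$ is a non-meager $\mathsf{P}$-filter, as claimed.

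I do not expect a real obstacle. The only points requiring care are the elementary set-theoretic identities relating the complements of $D$, $\FF$, and the $\FF_\alpha$, and — slightly more substantively — the observation that $\add(m^0)$ may legitimately be computed inside $2^\omega\setminus D$ instead of inside $\omega^\omega$ itself, which is precisely where the identification with $\omega^\omega$ furnished by Theorem \ref{theorem_characterization_baire_space} is used.
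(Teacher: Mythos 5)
Your proposal is correct and is essentially the paper's own argument: identify $2^\omega\setminus D$ with $\omega^\omega$ via Theorem \ref{theorem_characterization_baire_space}, apply Corollary \ref{corollary_miller_filters} to each $\FF_\alpha$ to see that each $2^\omega\setminus\FF_\alpha$ is Miller-null in $2^\omega\setminus D$, use $\kappa<\add(m^0)$ to conclude the same for their union, and apply Corollary \ref{corollary_miller_filters} again to $\bigcap_{\alpha\in\kappa}\FF_\alpha$. The paper states this in two lines; you have merely filled in the routine verifications.
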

\begin{proof}
Using Theorem \ref{theorem_characterization_baire_space}, one sees that $2^\omega\setminus D$ is homeomorphic to $\omega^\omega$ for every countable dense subset $D$ of $2^\omega$. The rest of the proof is clear by Corollary \ref{corollary_miller_filters}.
\end{proof}

\section{Preservation of the Miller property under intersections}\label{section_preservation_intersections}

In this section, we will see that the $\MP$ is preserved by sufficiently small intersections. This shows that Corollary \ref{corollary_intersection_filters} is just a particular case of a more general phenomenon.

\begin{theorem}\label{theorem_intersection_miller}
Let $\kappa<\add(m^0)$ be a non-zero cardinal, let $Z$ be a separable metrizable space, and let $X_\alpha$ be subspaces of $Z$ with the $\MP$ for $\alpha\in\kappa$. Then $\bigcap_{\alpha\in\kappa}X_\alpha$ has the $\MP$.
\end{theorem}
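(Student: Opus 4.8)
The plan is to reduce the statement to an application of Proposition \ref{proposition_characterization_miller}, exploiting the fact that $\add(m^0)$ is, by definition, the least cardinality of a family of Miller-null sets whose union fails to be Miller-null. First I would fix a metrizable compactification $\gamma Z$ of $Z$; then $\gamma Z$ is simultaneously a metrizable compactification of $Z$ and of each $X_\alpha$ and of $X=\bigcap_{\alpha\in\kappa}X_\alpha$. By the implication $(1)\rightarrow(2)$ of Proposition \ref{proposition_characterization_miller}, for every countable dense $D\subseteq X_\alpha$ the set $X_\alpha\setminus D$ is Miller-full in $\gamma Z\setminus D$. The goal is to verify condition $(3)$ of Proposition \ref{proposition_characterization_miller} for $X$ with this same compactification: given a countable dense $D\subseteq X$, show $X\setminus D$ is Miller-full in $\gamma Z\setminus D$.

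So fix a countable dense $D\subseteq X$. Since $X\subseteq X_\alpha$ and $X$ is dense in $X$, the set $D$ is also a countable dense subset of each $X_\alpha$ (indeed dense in $\gamma Z$). Hence for each $\alpha$ the set $X_\alpha\setminus D$ is Miller-full in $\gamma Z\setminus D$, i.e. $(\gamma Z\setminus D)\setminus X_\alpha = \gamma Z\setminus X_\alpha$ is Miller-null in $\gamma Z\setminus D$ (note $D\subseteq X_\alpha$, so removing $D$ doesn't affect the complement). Now I would observe
$$
(\gamma Z\setminus D)\setminus X = \bigcup_{\alpha\in\kappa}\big((\gamma Z\setminus D)\setminus X_\alpha\big),
$$
a union of $\kappa<\add(m^0)$ Miller-null subsets of $\gamma Z\setminus D$. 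Here one must be slightly careful: $\add(m^0)$ is literally defined using $m^0=m^0(\omega^\omega)$, so to invoke it I need the ambient space $\gamma Z\setminus D$ to be recognized as $\omega^\omega$ (or to carry an additivity at least $\add(m^0)$). By Theorem \ref{theorem_characterization_baire_space}, $\gamma Z\setminus D$ is a zero-dimensional Polish space in which no non-empty open set is compact provided it contains a closed copy of $\omega^\omega$ at all — and if it contains no such copy, then every subset is Miller-null and Miller-full, so the conclusion is vacuous. In the non-trivial case $\gamma Z\setminus D\cong\omega^\omega$, and the definition of Miller-null given in Section \ref{section_measurability} (invariant under homeomorphism of the ambient space) lets me transport the additivity statement, so the union above is Miller-null in $\gamma Z\setminus D$.

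Therefore $X\setminus D$ is Miller-full in $\gamma Z\setminus D$. Since $D$ was an arbitrary countable dense subset of $X$, condition $(3)$ of Proposition \ref{proposition_characterization_miller} holds for $X$, and we conclude that $X=\bigcap_{\alpha\in\kappa}X_\alpha$ has the $\MP$.

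I expect the only real subtlety — and hence the point to write out carefully — is the bookkeeping around the ambient space: checking that the relevant complements in $\gamma Z\setminus D$ coincide with complements in $\gamma Z$ (because $D$ lies inside every $X_\alpha$), and justifying that $\add$ of the Miller ideal in $\gamma Z\setminus D$ equals $\add(m^0)$ via the homeomorphism $\gamma Z\setminus D\cong\omega^\omega$ together with the homeomorphism-invariance of Miller-nullity built into the definitions of Section \ref{section_measurability}. The degenerate case where $\gamma Z\setminus D$ contains no closed copy of $\omega^\omega$ should be disposed of in one sentence. Everything else is a direct unwinding of Proposition \ref{proposition_characterization_miller} and the definition of $\add(m^0)$.
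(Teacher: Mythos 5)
There is a genuine gap, and it sits at the heart of your argument: the claim that a countable dense subset $D$ of $X=\bigcap_{\alpha\in\kappa}X_\alpha$ is ``also a countable dense subset of each $X_\alpha$ (indeed dense in $\gamma Z$)'' is false, as is the claim that $\gamma Z$ is a compactification of each $X_\alpha$ and of $X$. Density does not transfer upward: $D$ dense in $X$ gives $X\subseteq\cl(D)$, not $X_\alpha\subseteq\cl(D)$. For a concrete failure, take $Z=X_0=2^\omega$ and let $X_1$ be a nowhere dense closed copy of $2^\omega$ in $Z$; then $X=X_1$, a countable dense subset $D$ of $X$ is nowhere dense in $Z$, and neither is $X_\alpha$ dense in $\gamma Z$ nor is $D$ dense in $X_0$. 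Consequently you cannot invoke the implication $(1)\rightarrow(2)$ of Proposition \ref{proposition_characterization_miller} for $X_\alpha$ with this $D$, and at the end you cannot invoke condition $(3)$ for $X$ with the compactification $\gamma Z$, since $X$ need not be dense in it. The paper's proof is built precisely to manufacture the situation you are assuming: starting from a countable crowded $Q\subseteq X$, it first produces a \emph{zero-dimensional compact} $K$ in which $K\cap Q$ is crowded and dense; since $K\cap Q\subseteq X\subseteq X_\alpha$, the set $K\cap Q$ is automatically dense in every $X_\alpha\cap K$, each $X_\alpha\cap K$ still has the $\MP$ (being closed in $X_\alpha$), and $K$ really is a common compactification of all of them. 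Only then does the additivity argument go through. Your outline could be repaired in this spirit (e.g.\ by replacing $\gamma Z$ with $\cl_{\gamma Z}(X)$ and each $X_\alpha$ with $X_\alpha\cap\cl_{\gamma Z}(X)$), but that repair is exactly the missing idea, not bookkeeping.

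A secondary, also genuine, problem is your dichotomy for $\gamma Z\setminus D$: you assert that if it contains a closed copy of $\omega^\omega$ then it \emph{is} homeomorphic to $\omega^\omega$ by Theorem \ref{theorem_characterization_baire_space}. That theorem requires zero-dimensionality, which $Z$ (hence $\gamma Z\setminus D$) need not have; e.g.\ $[0,1]^2$ minus a countable dense set contains closed copies of $\omega^\omega$ but also contains arcs, so it is not zero-dimensional. One can still transport $\add(m^0)$ to an arbitrary ambient space by localizing to each closed copy $N$ of $\omega^\omega$ (the traces of Miller-null sets on $N$ are Miller-null in $N\cong\omega^\omega$, since closed subsets of $N$ are closed in the ambient space), but as written your justification does not work. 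The paper sidesteps this too, by arranging $K$ to be zero-dimensional so that $K\setminus Q$ is genuinely homeomorphic to $\omega^\omega$ via Alexandrov--Urysohn.
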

\begin{proof}
Set $X=\bigcap_{\alpha\in\kappa}X_\alpha$, and pick a countable crowded $Q\subseteq X$. Fix a metrizable compactification $\gamma X$ of $X$. First, using \cite[Exercise 7.4.17]{engelking}, find a zero-dimensional Polish subspace $W$ of $\gamma X$ such that $Q\subseteq W$. Since Polish spaces have the $\MP$ by \cite[Proposition 2.3]{medini_zdomskyy}, there exists a compact subspace $K$ of $W$ such that $K\cap Q$ is crowded and dense in $K$; we will use $\cl$ to denote closure in $K$.

Set $X'=X\cap K$ and $X'_\alpha=X_\alpha\cap K$ for $\alpha\in\kappa$. Observe that each $X'_\alpha$ has the $\MP$ because it is a closed subspace of $X_\alpha$. Furthermore, it is clear that $K$ is a metrizable compactification of each $X'_\alpha$ because $K\cap Q\subseteq X'_\alpha$. It follows from Proposition \ref{proposition_characterization_miller} that each $X'_\alpha\setminus Q$ is Miller-full in $K\setminus Q$. Since $K\setminus Q$ is homeomorphic to $\omega^\omega$ by Theorem \ref{theorem_characterization_baire_space}, the assumption $\kappa<\add(m^0)$ ensures that $X'\setminus Q$ is Miller-full in $K\setminus Q$. In particular, there exists a closed copy $N$ of $\omega^\omega$ in $K\setminus Q$ such that $N\subseteq X'\setminus Q$. Set $Q'=\cl(N)\cap Q$, and observe that $Q'$ is crowded by Lemma \ref{lemma_crowded}. On the other hand $\cl(Q')\subseteq\cl(N)\subseteq N\cup Q\subseteq X$, which shows that $Q'$ has compact closure in $X$, as desired.
\end{proof}

Naturally, it would be nice to confirm that $\add(m^0)$ is optimal.

\begin{question}
Set $\kappa=\add(m^0)$. Are there a separable metrizable space $Z$ and subspaces $X_\alpha$ of $Z$ with the $\MP$ for $\alpha\in\kappa$ such that $\bigcap_{\alpha\in\kappa}X_\alpha$ does not have the $\MP$?
\end{question}

\section{Preservation of the Miller property under products}\label{section_preservation_products}

We conclude by showing that the $\MP$ is preserved by countable products (see also Question \ref{question_product_miller}). First we will focus on the following very special case, then we will apply Theorem \ref{theorem_intersection_miller} to obtain the full result.

\begin{lemma}\label{lemma_product_miller}
Let $X$ be a separable metrizable space with the $\MP$, and let $K$ be a compact metrizable space. Then $X\times K$ has the $\MP$.
\end{lemma}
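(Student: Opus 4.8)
The plan is to use the characterization of the $\MP$ given by Proposition \ref{proposition_characterization_miller}, which reduces everything to a statement about Miller-full sets. Fix a metrizable compactification $\gamma X$ of $X$; then $\gamma X\times K$ is a metrizable compactification of $X\times K$. Given a countable dense subset $D$ of $X\times K$, I want to show that $(X\times K)\setminus D$ is Miller-full in $(\gamma X\times K)\setminus D$, i.e.\ that $(\gamma X\times K)\setminus(X\times K)=(\gamma X\setminus X)\times K$ is Miller-null there. So let $N$ be a closed copy of $\omega^\omega$ in $(\gamma X\times K)\setminus D$; I need to find a closed copy $N'\subseteq N$ of $\omega^\omega$ in $(\gamma X\times K)\setminus D$ that misses $(\gamma X\setminus X)\times K$, equivalently $N'\subseteq(X\times K)\setminus D$.

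The key idea is to shrink $N$ using a fiber argument. Since $N$ is $\sigma$-compact only if it is compact, and $N\cong\omega^\omega$ is not compact, by Theorem \ref{theorem_kechris_saint_raymond} applied inside the Polish space $N$: for the analytic set $N\cap((\gamma X\setminus X)\times K)$, either it is contained in a $\sigma$-compact subset of $N$, or it contains a closed copy of $\omega^\omega$ in $N$. If the first alternative holds, then $N$ minus that $\sigma$-compact set is a $G_\delta$ subset of $N\cong\omega^\omega$ that meets every non-empty clopen piece in something non-$\sigma$-compact (after shrinking), and one extracts a closed copy of $\omega^\omega$ inside it missing $(\gamma X\setminus X)\times K$ by a standard tree argument — this would already be enough. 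So the real work is the second alternative, and the natural move is to reduce to the $\MP$ of $X$ by projecting: let $N''\subseteq N$ be a closed copy of $\omega^\omega$ contained in $(\gamma X\setminus X)\times K$, consider its image under $\pi:\gamma X\times K\to\gamma X$, which lands in $\gamma X\setminus X$; since $K$ is compact, $\pi\re N''$ is a perfect map, so the image is a non-$\sigma$-compact analytic (hence, again by Theorem \ref{theorem_kechris_saint_raymond}) subset of $\gamma X\setminus X$ containing a closed copy of $\omega^\omega$. But $X$ has the $\MP$, so by Proposition \ref{proposition_characterization_miller} (with the dense set $\pi[D]$, or rather a suitable countable dense $D_0\subseteq X$ refining it) the set $\gamma X\setminus X$ is Miller-null in $(\gamma X\setminus D_0)$ — contradicting the existence of such a copy, once we arrange the copy to avoid $D_0$. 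Carefully chaining these reductions yields that the second alternative is impossible, so we are always in the first, benign case.

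Concretely, the steps in order are: (i) set up $\gamma X\times K$ as a compactification and translate the goal via Proposition \ref{proposition_characterization_miller}; (ii) given the closed copy $N\cong\omega^\omega$ missing $D$, use a tree/fusion scheme on $N$ that, at each splitting node, invokes the $\MP$ of $X$ through Proposition \ref{proposition_characterization_miller} to push the branch off of $(\gamma X\setminus X)\times K$ while staying off $D$ — the point is that the fiberwise projection to $\gamma X$ is closed because $K$ is compact, so a closed copy of $\omega^\omega$ in $X\setminus\pi[D]$ inside $\pi[N]$ pulls back, after intersecting with a suitable compact-fiber slice, to a closed copy of $\omega^\omega$ in $(X\times K)\setminus D$ inside $N$; (iii) conclude via Theorem \ref{theorem_characterization_baire_space} that what we built is genuinely homeomorphic to $\omega^\omega$ and closed in $(\gamma X\times K)\setminus D$, and hence $X\times K$ has the $\MP$.

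The main obstacle I anticipate is the bookkeeping in step (ii): the $\MP$ of $X$ only gives Miller-fullness of $X\setminus D_0$ for dense $D_0\subseteq X$, so one must thread a single countable dense set through the fiber argument and make sure the copy of $\omega^\omega$ produced in $\gamma X$ (missing $D_0$) lifts to one in $\gamma X\times K$ actually missing the given $D$, not merely $D_0\times K$. Handling this cleanly probably requires replacing $K$ by a zero-dimensional Polish $W\subseteq\gamma X\times K$ with $W\cap D$ prescribed (via \cite[Exercise 7.4.17]{engelking}, as in the proofs above) and working with compact slices of $W$, so that the projection map restricted to the relevant compact sets is a homeomorphism onto its image in a controlled way. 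Once that coordinatization is in place, the fiber argument is routine, and the rest follows from the Alexandrov–Urysohn characterization (Theorem \ref{theorem_characterization_baire_space}) together with Lemma \ref{lemma_crowded}.
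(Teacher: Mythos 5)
Your overall strategy --- reduce to Miller-fullness via Proposition \ref{proposition_characterization_miller}, shrink $N$ fiberwise, and exploit the $\MP$ of $X$ through the projection $\pi$ --- is the right one, and large parts of it match the paper's proof. But there is a genuine gap at the very first step of your ``key idea'': you apply Theorem \ref{theorem_kechris_saint_raymond} to the set $N\cap\big((\gamma X\setminus X)\times K\big)$ viewed as a subset of the Polish space $N$. That theorem requires the set to be \emph{analytic}, and here it need not be: $X$ is merely assumed to be separable metrizable with the $\MP$, so $X$ (hence $\gamma X\setminus X$, hence $N\setminus(X\times K)$) carries no definability whatsoever. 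In the paper's main application $X$ is a non-meager $\mathsf{P}$-filter, which is provably non-analytic (analytic filters are meager), so this is not a removable technicality; and for arbitrary sets the Kechris--Saint Raymond dichotomy simply fails (a Bernstein set in $\omega^\omega$ is neither contained in a $\sigma$-compact set nor contains a closed copy of $\omega^\omega$). The same issue recurs when you later invoke the theorem ``in $\gamma X\setminus X$'', which moreover is not a Polish space.

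The paper's proof avoids this by never applying Theorem \ref{theorem_kechris_saint_raymond} to anything involving $X$ directly. Setting $E=\pi[D]$, it splits on whether some fiber $(\cl(N)\setminus N)\cap(\{x\}\times K)$ with $x\in E$ is non-scattered: if so, the desired copy is found inside the single compact fiber $\{x\}\times K\subseteq X\times K$ using Lemma \ref{lemma_crowded}; if not, each such fiber meets $N$ in a $\sigma$-compact set, one passes to the Polish space $(\gamma X\setminus E)\times K$, and Kechris--Saint Raymond is applied only to honestly analytic sets ($N$ minus countably many fibers, then $\pi[N]$, then $N\cap\pi^{-1}[M']$). The $\MP$ of $X$ enters only as a black box, via the Miller-fullness of $X\setminus E$ in $\gamma X\setminus E$ given by Proposition \ref{proposition_characterization_miller}, which requires no definability of $X$. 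To repair your argument you would need to replace your opening dichotomy with some such decomposition; as written, the proof does not go through.
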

\begin{proof}
Fix a metrizable compactification $\gamma X$ of $X$. We will show that condition $(3)$ of Proposition \ref{proposition_characterization_miller} holds for the compactification $\gamma X\times K$ of $X\times K$; we will use $\cl$ to denote closure in $\gamma X\times K$. So pick a countable dense subset $D$ of $X\times K$, and a closed copy $N$ of $\omega^\omega$ in $(\gamma X\times K)\setminus D$. Denote by $\pi:\gamma X\times K\longrightarrow\gamma X$ the projection, and set $E=\pi[D]$. Given $x\in E$, set $D_x=(\cl(N)\setminus N)\cap (\{x\}\times K)$, and observe that each $D_x\subseteq D$.

\noindent\textbf{Case 1.} There exists $x\in E$ such that $D_x$ is non-scattered.

\noindent Let $Q\subseteq D_x$ be crowded. Set $N'=\cl(Q)\setminus D$, and observe that $N'\subseteq N$ because $\cl(Q)\subseteq\cl(N)\subseteq N\cup D$. Furthermore, it is clear that $N'$ is closed in $(\gamma X\times K)\setminus D$. At this point, one sees that $N'$ is homeomorphic to $\omega^\omega$ by Theorem \ref{theorem_characterization_baire_space}. Finally, the fact that $\cl(Q)\subseteq\{x\}\times K\subseteq X\times K$ shows that $N'\subseteq (X\times K)\setminus D$.

\noindent\textbf{Case 2.} $D_x$ is scattered for every $x\in E$.

\noindent Since scattered separable metrizable spaces are Polish (see \cite[Corollary 21.21]{kechris}), we see that $(\{x\}\times K)\setminus D_x$ must be $\sigma$-compact for every $x\in E$. Hence
$$
N\cap (\{x\}\times K)=\big((\{x\}\times K)\setminus D_x\big)\cap\big(\cl(N)\cap(\{x\}\times K)\big)
$$
is $\sigma$-compact for every $x\in E$.

Set $Z=(\gamma X\setminus E)\times K$, and observe that $N\setminus (E\times K)$ is an analytic subset of the Polish space $Z$. Assume, in order to get a contradiction, that $N\setminus (E\times K)$ is contained in a $\sigma$-compact subspace of $Z$. Since $E$ is countable and $N\cap (\{x\}\times K)$ is $\sigma$-compact for every $x\in E$, it follows that $N$ itself is $\sigma$-compact, a contradiction. Therefore, by Theorem \ref{theorem_kechris_saint_raymond}, we can assume without loss of generality that $N$ is a (closed) subset of $Z$.

Now set $W=\gamma X\setminus E$, and observe that $\pi[N]$ is an analytic subset of the Polish space $W$. If $\pi[N]$ were contained in a $\sigma$-compact subspace of $W$, then it would easily follow that $N$ is $\sigma$-compact, a contradiction. Therefore, by Theorem \ref{theorem_kechris_saint_raymond}, there exists a closed copy $M$ of $\omega^\omega$ in $W$ such that $M\subseteq\pi[N]$. But $X$ has the $\MP$, so by Proposition \ref{proposition_characterization_miller} there exists a closed copy $M'\subseteq M$ of $\omega^\omega$ in $W$ such that $M'\subseteq X\setminus E$.

Finally, set $N'=N\cap\pi^{-1}[M']$, and observe that $N'$ is a closed subset of $Z$. Therefore, if $N'$ were contained in a $\sigma$-compact subspace of $Z$, then $N'$ itself would be $\sigma$-compact, contradicting the fact that $\pi[N']=M'$. It follows from Theorem \ref{theorem_kechris_saint_raymond} that there exists a closed copy $N''$ of $\omega^\omega$ in $Z$ such that $N''\subseteq N'$. It is easy to realize that $N''$ is a closed copy of $\omega^\omega$ in $(\gamma X\times K)\setminus D$ such that $N''\subseteq (X\times K)\setminus D$.
\end{proof}

\begin{theorem}\label{theorem_product_miller}
Let $\kappa\leq\omega$ be a cardinal, and let $X_\alpha$ be separable metrizable spaces with the $\MP$ for $\alpha\in\kappa$. Then $\prod_{\alpha\in\kappa}X_\alpha$ has the $\MP$.
\end{theorem}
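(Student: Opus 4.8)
The plan is to realize the countable product as an intersection of countably many ``single-variable slices,'' each of which has the $\MP$ by Lemma \ref{lemma_product_miller}, and then to invoke the preservation-under-intersections result of Theorem \ref{theorem_intersection_miller}. The cases $\kappa\in\{0,1\}$ are trivial (the empty product is a singleton, which has the $\MP$ vacuously, while $\prod_{\alpha\in 1}X_\alpha=X_0$ has the $\MP$ by assumption), so I will assume $2\le\kappa\le\omega$. I may also assume that each $X_\alpha$ is non-empty, since otherwise $\prod_{\alpha\in\kappa}X_\alpha=\varnothing$ has the $\MP$ vacuously.

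First I would fix, for each $\alpha\in\kappa$, a metrizable compactification $K_\alpha=\gamma X_\alpha$ (which exists because each $X_\alpha$ is separable metrizable), and set $Z=\prod_{\alpha\in\kappa}K_\alpha$. Being a countable product of compact metrizable spaces, $Z$ is itself compact metrizable, hence separable metrizable; and I would identify $X:=\prod_{\alpha\in\kappa}X_\alpha$ with its obvious (subspace-topology) copy inside $Z$. For each $\alpha\in\kappa$, set
$$
X_\alpha^\ast=\big\{f\in Z:f(\alpha)\in X_\alpha\big\}=X_\alpha\times\prod_{\beta\in\kappa\setminus\{\alpha\}}K_\beta\subseteq Z,
$$
so that $\bigcap_{\alpha\in\kappa}X_\alpha^\ast=X$.

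Next I would verify that each $X_\alpha^\ast$ has the $\MP$. The space $L_\alpha=\prod_{\beta\in\kappa\setminus\{\alpha\}}K_\beta$ is compact metrizable, being a countable product of compact metrizable spaces, so $X_\alpha\times L_\alpha$ has the $\MP$ by Lemma \ref{lemma_product_miller}; since $X_\alpha^\ast$ is homeomorphic to $X_\alpha\times L_\alpha$ and the $\MP$ is a topological property, $X_\alpha^\ast$ has the $\MP$. Finally, because $\kappa\le\omega<\omega_1\le\add(m^0)$, Theorem \ref{theorem_intersection_miller} applied to the separable metrizable space $Z$ and the family $\{X_\alpha^\ast:\alpha\in\kappa\}$ of subspaces of $Z$ with the $\MP$ yields that $X=\bigcap_{\alpha\in\kappa}X_\alpha^\ast$ has the $\MP$, as desired.

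I do not anticipate any genuine obstacle here: all the substantive work has already been carried out, in Lemma \ref{lemma_product_miller} (the two-factor case with one compact factor, where the descriptive-set-theoretic dichotomy of Theorem \ref{theorem_kechris_saint_raymond} is invoked) and in Theorem \ref{theorem_intersection_miller}. The only points requiring a moment of care are the bookkeeping of the degenerate cases and the trivial remark that $\kappa\le\omega$ falls within the range $\kappa<\add(m^0)$ demanded by Theorem \ref{theorem_intersection_miller}, which is immediate from $\omega_1\le\add(m^0)$.
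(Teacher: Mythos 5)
Your proposal is correct and is essentially identical to the paper's own proof: both realize the product as $\bigcap_{\alpha\in\kappa}\{x\in\prod_{\beta\in\kappa}\gamma X_\beta:x(\alpha)\in X_\alpha\}$, apply Lemma \ref{lemma_product_miller} to each slice, and invoke Theorem \ref{theorem_intersection_miller} via $\omega<\omega_1\leq\add(m^0)$. The extra attention to the degenerate cases $\kappa\in\{0,1\}$ and empty factors is harmless bookkeeping that the paper leaves implicit.
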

\begin{proof}
Fix metrizable compactifications $\gamma X_\alpha$ of $X_\alpha$ for $\alpha\in\kappa$. Set $Z=\prod_{\alpha\in\kappa}\gamma X_\alpha$, and observe that $Z$ is separable metrizable because $\kappa\leq\omega$. Define
$$
X'_\alpha=\{x\in Z:x(\alpha)\in X_\alpha\}
$$
for $\alpha\in\kappa$, and observe that each $X'_\alpha$ has the $\MP$ by Lemma \ref{lemma_product_miller}. Since clearly $\prod_{\alpha\in\kappa}X_\alpha=\bigcap_{\alpha\in\kappa}X'_\alpha$, the desired conclusion follows from Theorem \ref{theorem_intersection_miller}.
\end{proof}

As in Section \ref{section_measurability}, we could formulate explicitly the particular case of Theorem \ref{theorem_product_miller} in which every $X_\alpha$ is a filter. We will refrain from doing so however, because this would amount to repeating Proposition \ref{proposition_generalized_shelah}, whose proof is much more direct.

We remark that the above result is somewhat relevant to a question of Fitzpatrick and Zhou \cite[Problem 4]{fitzpatrick_zhou}, which can be interpreted as asking for a ``natural'' topological property $\PP$ such that a zero-dimensional separable metrizable space $X$ has property $\PP$ iff $X^\omega$ is $\CDH$. Notice that any property which satisfies this requirement would have to be preserved by countable powers. In particular, Theorem \ref{theorem_product_miller} could be viewed as encouraging by anyone who thinks that the $\MP$ might be a good candidate for such a property.

\end{document}